\newcommand*{\shuffle}
{{\,\begin{sideways}\begin{sideways}\begin{sideways}
$\tiny{\exists}$\end{sideways}\end{sideways}\end{sideways}\,}}
\newtheorem{theorem}{Theorem}
\newtheorem{rmk}{Remark}
\newtheorem{definition}{Definition}
\newtheorem{lemma}{Lemma}
\newtheorem{proposition}{Proposition}
\title[Arborification and renormalisation of RDEs]{Quasi-shuffle algebras and renormalisation of rough differential equations}
\author{Yvain Bruned}
\address{Department of Mathematics, 
		Imperial College London, 
		London SW7 2AZ, UK.}
\email{y.bruned@imperial.ac.uk}
\author{Charles Curry}
\address{Department of Mathematical Sciences, 
		Norwegian University of Science and Technology (NTNU), 
		7491 Trondheim, Norway.}
\email{charles.curry@ntnu.no}
\author{Kurusch Ebrahimi-Fard}
\address{Department of Mathematical Sciences, 
		Norwegian University of Science and Technology (NTNU), 
		7491 Trondheim, Norway.}
\email{kurusch.ebrahimi-fard@ntnu.no}         
\urladdr{https://folk.ntnu.no/kurusche/}
\begin{document}


\begin{abstract}
The objective of this work is to compare several approaches to the process of renormalisation in the context of rough differential equations using the substitution bialgebra on rooted trees known from backward error analysis of $B$-series. For this purpose, we present a so-called arborification of the Hoffman--Ihara theory of quasi-shuffle algebra automorphisms. The latter are induced by formal power series, which can be seen to be special cases of the cointeraction of two Hopf algebra structures on rooted forests. In particular, the arborification of Hoffman's exponential map, which defines a Hopf algebra isomorphism between the shuffle and quasi-shuffle Hopf algebra, leads to a canonical renormalisation that coincides with Marcus' canonical extension for semimartingale driving signals. This is contrasted with the canonical geometric rough path of Hairer and Kelly by means of a recursive formula defined in terms of the coaction of the substitution bialgebra.
\end{abstract}


\maketitle

\tableofcontents


{\tiny{\bf Keywords}: quasi-shuffle algebra; arborification; pre-Lie algebra; Hopf algebra; renormalisation; rough paths;}

{\tiny{\bf MSC Classification}: 16T05, 16T10, 16T15, 16T30; 60H05; 60H10}


\section{Introduction}
\label{sect:intro}
 
We consider controlled differential equations of the form
\begin{equation}
\label{eq:CODE}
	dY_t = \sum_{i=0}^d f_i(Y_t)dX^i_t,
\end{equation}
where the $f_i$ are vector fields on $\mathbb{R}^{e}$ and $X:[0,T] \rightarrow \mathbb{R}^{d+1}$ is the driving signal. For sufficiently regular $f_i$ and $X$, e.g., Lipschitz continuous vector fields and $X$ of bounded variation, a unique solution exists, for which we have the formal expansion
\begin{equation}
\label{eq:CODEsolution}
	Y_t =  Y_s + \sum_{\tau \in \mathcal{T}_A} \frac{1}{\sigma(\tau)} \mathfrak{F}_f[\tau](Y_s) X^{\tau}_{st}, 
\end{equation}
where the sum is indexed by the set of non-planar rooted trees $\tau \in \mathcal{T}_A$ with vertex decorations in the set $A=\{0,1,\ldots,d\}$. The symmetry factor $\sigma(\tau)$ is a combinatorial coefficient defined in \textsection 2.2. The $\mathfrak{F}_f[\tau]$ are the so-called elementary differentials corresponding to the vector fields $f_i$ and the rooted tree $\tau \in \mathcal{T}_A$ with decorations in $A$. The $X^{\tau}_{st}$ are multiple integrals of the driving signal over domains indexed by $\tau \in \mathcal{T}_A$. For instance, as the domain of integration specified by a (decorated) non-planar rooted tree involves a partial order, the integral $X^{\tau}_{st}$ corresponding to the tree  ${}_{\phantom{i_1}}^{i_2}\hspace{-.08cm} \scalebox{0.5}{\aababb}^{\!i_3}_{\!\!i_1} \in \mathcal{T}_A$ is given by
$$
	X^{\scalebox{0.3}{\aababb}}_{st}=
	\int_{s\le t_2 \le t_1\le t \atop s\le t_3 \le t_1\le t }
	dX^{i_1}_{t_1}dX^{i_2}_{t_2} dX^{i_3}_{t_3}=
	\int_{s}^{t}
	dX^{i_1}_{t_1}\int_{s}^{t_1}dX^{i_2}_{t_2} \int_{s}^{t_1}dX^{i_3}_{t_3}.
$$ 
The reader is referred to Gubinelli's work \cite{Gubinelli10,Gubinelli11} for more details. Under the assumption that iterated integrals obey classical integration by parts, the domain may be split up according to all the linear extensions of the partial order specified by the rooted tree, such that $X^{\tau}_{st}$ can be resolved into linear combinations of strictly iterated integrals. For the above example this yields
$$
	X^{\scalebox{0.3}{\aababb}}_{st}=
	\int_{s}^{t}
	dX^{i_1}_{t_1}\int_{s}^{t_1}dX^{i_3}_{t_2} \int_{s}^{t_2}dX^{i_2}_{t_3}+
	\int_{s}^{t}
	dX^{i_1}_{t_1}\int_{s}^{t_1}dX^{i_2}_{t_2} \int_{s}^{t_2}dX^{i_3}_{t_3}.
$$
Collecting the coefficients of the resulting iterated integrals in the expansion \eqref{eq:CODEsolution} yields Chen's classical word series
\[
	Y_t = Y_s + \sum_{w \in T(\mathbb{R}^{d+1})} V_f (w)(Y_s) X^w_{st}
\]
as solution to the controlled differential equation \eqref{eq:CODE}, where the sum is now over words in the tensor algebra $T(\mathbb{R}^{d+1})$ over $\mathbb{R}^{d+1}$ with basis vectors $\{e_0,\ldots,e_d\}$. The key to Lyons' theory of rough differential equations \cite{Lyons98,LCL07} is that for rough signals, say $\alpha$-H\"{o}lder continuous for some $0<\alpha<\frac{1}{2}$, a complete set $\mathbb{X}_{st}$ of iterated integrals $X^w_{st}$ may be constructed canonically from a finite subset of iterated integrals. The amount of multiple integrals required to complete the construction depends on the regularity of the driving path. The differential equation is then interpreted as being driven by the rough path $\mathbb{X}_{st}$:
\[
	d\mathbb{Y} = f(\mathbb{Y})d\mathbb{X},
\]
where existence and uniqueness is determined by means of fixed point arguments in a space of suitably controlled rough paths. 

The assumption that the iterated integrals obey a conventional integration by parts relation is at times restrictive, and led Gubinelli \cite{Gubinelli10,Gubinelli11} to introduce the notion of a branched rough path as a collection of iterated integrals $X^{\tau}_{st}$ indexed by rooted trees. Differential equations driven by branched rough paths are interpreted in a similar manner. We remark here that from an algebraic point of view, a geometric (branched) rough path is nothing other than a two-parameter family of characters over the shuffle (Butcher--Connes--Kreimer) Hopf algebra (of rooted trees), obeying certain analytical conditions. The reader is referred to Hairer's and Kelly's article \cite{HaiKel15} for detailed accounts on Lyons' and Gubinelli's rough paths.

The importance of the process of renormalisation in Hairer's theory of regularity structures and its relation to rough path theory \cite{BruHaiZam16,FrizHairer14,Hairer14} has led to recent interest in the analogous role of renormalisation of rough differential equations \cite{BCFP17}. For example, suppose that $B_t$ is a Brownian path that we wish to lift to a (branched) rough path $\mathbb{B}_{st}$. This lift may be accomplished by stochastic integration, but in doing so we introduce a lifting parameter corresponding to the point where the integrand is evaluated in each subinterval of the Riemann sums. Taking the left endpoint results in an It\^{o} integral, whilst taking the midpoint gives a Stratonovich integral. Different rough path lifts are related by a renormalisation procedure \cite{BCFP17}, e.g., the usual It\^o--Stratonovich conversion is given by 
\[
	\mathbb{B}^{\mathrm{Strat}}_{st} = \mathbb{B}^{\mathrm{Ito}}_{st} + \frac{1}{2}I(t-s).
\] 
The importance of deciding upon and relating such lifts in practical stochastic modeling has resulted in several alternative descriptions of this apparent ambiguity in defining the stochastic integral, namely
\begin{enumerate}

	\item The canonical extensions of McShane and Marcus \cite{Marcus78,Marcus81,McShane75}

	\item Hoffman's exponential and logarithm \cite{CEMW14,EMPW15a,EMPW15b}

	\item Translations of rough paths by Bruned et al.~\cite{BCFP17}


\end{enumerate}

\medskip

In this paper we show that the substitution bialgebra on non-planar rooted trees defined in \cite{CEFM11}, which here is extended to decorated trees, provides a common method to understand these approaches. Hoffman's exponential and logarithm follow from a general theory showing that every formal power series induces a quasi-shuffle algebra automorphism, and that the composition of two such automorphisms is equivalent to the automorphism induced by the composition of the power series \cite{Hoffman00,HoffmanIhara16}. Among our main results is the description of an ``arborified" Hoffman exponential using the substitution bialgebra. In a nutshell, it corresponds to a simple replacement of the usual factorials in the exponential map by rooted tree factorials. Then we show that Marcus canonical extension \cite{Marcus78,Marcus81} is an instance of the adjoint of the arborified Hoffman exponential. 

Bruned et al.~\cite{BCFP17} show that the effect of translations on a rough differential equation can be measured by a change in the driving vector field $f$, and give a procedure to express multiple integrals with respect to the translated path in terms of multiple integrals of the original path. The mechanism relies on an extension to the decorated case of the cointeraction of the substitution bialgebra on the Butcher--Connes--Kreimer Hopf algebra, originally shown in \cite{CEFM11}. Among the possible lifts of Brownian paths, the Stratonovich lift is canonical in the sense that it gives rise to iterated integrals obeying the usual rules of calculus -- the resulting rough path is geometric in Lyons' terminology. Whenever a branched rough path is constructed by integration of paths obeying a quasi-shuffle law, a canonical renormalisation is obtained by the translation associated to the inverse tree factorial character. We show that in the case of semimartingale integrators, this coincides with the canonical extension of Marcus.

We conclude with a brief discussion of an alternative approach, where a canonical geometric rough path is constructed above a given branched rough path. This approach was initiated by Hairer and Kelly in \cite{HaiKel15}, and a related strategy has recently been pursued in \cite{BoeChe17}. The idea is to expand the vector space on which the signal is considered to live by defining a Hopf algebra morphism from the Butcher--Connes--Kreimer Hopf algebra to the tensor algebra with shuffle product defined over the set of rooted trees. Superficially this is unrelated to the substitution algebra approach, and we confine ourselves to the observation that the Hairer--Kelly map can be understood using the right comodule structure introduced in \cite{CEFM11}. 

\medskip

The paper is organised into six sections. The next section gives a brief overview of the necessary algebraic structures. In \textsection 3 we review how the effects of certain modifications of vector fields (equivalently, translations of rough paths) can be understood algebraically. Hoffman's exponential is given in \textsection 4, and extended to the arborified case. A formula for its adjoint is also given. We follow this with a discussion of the Marcus canonical extension in \textsection 5, showing that this can be understood in terms of the adjoint of the arborified Hoffman exponential. We conclude in \textsection 6 with a brief discussion of the different approach by Hairer and Kelly.

\vspace{0.5cm}

{\bf{Acknowledgements}}: The research on this paper was partially supported by the Norwegian Research Council (project 231632). The first named author thanks Martin Hairer for financial support via
a Leverhulm Trust leadership award. The second named author would like to thank Dominique Manchon for helpful discussions.

\vspace{0.5cm}


\section{Rooted trees, words and Hopf algebras}
\label{sect:TreesWordsHopf}

In what follows $k$ denotes the ground field of characteristic zero over which all algebraic structures are considered.


\subsection{Pre-Lie algebra}
\label{ssect:preLie}

 A left pre-Lie algebra $(P, \triangleright)$ is a $k$-vector space $P$ together with a bilinear product $\triangleright \colon P \otimes P \to P$ which satisfies the left pre-Lie identity  \cite{Cartier10,Manchon11}
\begin{equation}
\label{eq:lpre-Lie}
	(x \triangleright y)\triangleright z - x\triangleright (y\triangleright z)
	= (y \triangleright x) \triangleright z - y \triangleright (x \triangleright z),
\end{equation}
for any elements $x,y,z \in P$. An analogous definition exists for right pre-Lie algebra. Note that \eqref{eq:lpre-Lie} writes as $L_{[x,y]\triangleright}(z)=[L_{x\triangleright},L_{y\triangleright}](z)$, where $L_{a\triangleright}(b):=a \rhd b$ and $L^n_{a\triangleright}(b):=a\triangleright (L^{n-1}_{a\triangleright }(b))$, $L^0_{a \triangleright }:=\mathrm{id}$, for any $a,b \in P$. The bracket $[x,y]:=x \triangleright y - y \triangleright x$ satisfies the Jacobi identity. Let $(P_1, \triangleright_1)$ and $(P_2, \triangleright_2)$ be two pre-Lie algebras. A pre-Lie morphism is a $k$-linear map $\psi: P_1 \to P_2$, such that $\psi(x \triangleright_1 y)=\psi(x) \triangleright_2 \psi(y)$. A natural example of pre-Lie algebra is given in terms of a differentiable manifold $M$ with a flat and torsion-free connection. The corresponding covariant derivation $\nabla$ on the space $\chi(M)$ of vector fields on $M$ provides it with a left pre-lie algebra structure, which is defined by $f \triangleright g=\nabla_fg$, by virtue of the two equalities $\nabla_fg-\nabla_gf=[f,g]$ and $\nabla_{[f,g]}=[\nabla_f,\nabla_g]$, which express vanishing of torsion and curvature, respectively. Let $M=\mathbb{R}^n$ with its standard flat connection. For $f(x)=\sum_{i=1}^nf^i(x)\frac{\partial}{\partial x^i}$ and $g(x)=\sum_{i=1}^ng^i(x)\frac{\partial}{\partial x^i}$ it follows that
\begin{equation}
\label{preLieVect1}
	(f \triangleright g) (x)=\sum_{i=1}^n\Big(\sum_{j=1}^n 
	f^j(x)\frac{\partial}{\partial x^j}g^i(x)\Big)\frac{\partial}{\partial x^i}.
\end{equation}
Consider the initial value problem $\dot{y}_t=f(y_t)$, $y_{t=0}$, where $f$ is a vector field on $\mathbb{R}^n$. The solution can be written in terms of the flow, by using the pre-Lie product \eqref{preLieVect1} of vector fields \cite{Chapoton01}
\begin{equation}
\label{preLieVect2}
	y_t = (e^{(L_{tf \triangleright})}\circ \mathrm{id})y_0
	      =y_0 + \sum_{n>0} \frac{t^n}{n!}L^{n-1}_{f \triangleright}(f)(y_0).
\end{equation}
This can be seen from iterating the equivalent integral equation 
\begin{equation}
\label{solution}
	y_t = y_0 + \int_0^t f(y_s)ds.
\end{equation}
Indeed, observe that for $h: \mathbb{R}^n \to \mathbb{R}^n$, we have that $\dot{h}(y_t) = (f(y_t) \cdot \nabla) h(y_t)$. From this we see that $h(y_t)=h(y_0) + \int_0^t (f(y_s) \cdot \nabla) h(y_s) ds$. For $h=\mathrm{id}$ it follows that $(f(y_s) \cdot \nabla) y_s =f(y_s)$ and $(f(y_s) \cdot \nabla)(f(y_s) \cdot \nabla) y_s =(f(y_s) \cdot \nabla)f(y_s)=(f\triangleright f)(y_s)$. Applying this to \eqref{solution} and iterating yields
\begin{equation}
\label{solutionFlow}
	y_t= y_0 + f(y_0)t + (f \triangleright f)(y_0) \frac{t^2}{2!} 
	+ (f \triangleright (f \triangleright f))(y_0) \frac{t^3}{3!} + \cdots. 
\end{equation}


\subsubsection{Free pre-Lie algebra}
\label{sssect:freepre-Lie}

Recall that a rooted tree $\tau$ consists of vertices and nonintersecting oriented edges. All but one of the vertices have exactly one outgoing edge and an arbitrary number of incoming ones. The root is the one vertex with no outgoing edge and it is drawn on the bottom of the tree. The leaves are the only vertices without incoming edges. The set of non-planar rooted trees is denoted by 
\[
	\mathcal{T} = \left\{\begin{array}{c}
		\ab, \aabb,\aaabbb, \aababb, \aaaabbbb,\aaababbb,\aaabbabb,\aabababb,\ldots
 			\end{array}
			\right\}.
\]
The sets of vertices and edges of $\tau \in \mathcal{T}$ are denoted by $V(\tau)$ respectively $E(\tau)$. 

Chapoton and Livernet \cite{ChaLiv01} showed that the basis of the free pre-Lie algebra $\mathcal{P}(\ab)$ in one generator can be expressed in terms of undecorated, non-planar rooted trees. The pre-Lie product in  $\mathcal{P}(\ab)$ is given in terms of grafting, that is, $\tau_1 \triangleright \tau_2$ is given by summing over all trees resulting from grafting the tree $\tau_1$ successively to each vertex of $\tau_2$: 
\begin{equation}
\label{treepreLie}
	\tau_1 \triangleright \tau_2 
	= \sum_{v \in V(\tau_2)} \tau_1 \to_{v} \tau_2 
	=\sum_{\tau \in \mathcal{T}} M^{\tau_1}_{\tau_2;\tau}\tau,
\end{equation}
where $\to_{v}$ denotes the grafting of the root of $\tau_1$ via a new edge to vertex $v$ of $\tau_2$. The coefficient $M^{\tau_1}_{\tau_2;\tau}$ is the number of vertices of $\tau_2$ such that grafting $\tau_1$ on $\tau_2$ at such a vertex gives the tree $\tau$, e.g.,
\begin{equation*}
	\ab  \triangleright \ab =  \aabb \ , 
	\quad
	 \ab  \triangleright \aabb = \aababb + \aaabbb \ ,  
	  \quad
	 \aabb \triangleright \aabb = \aaaabbbb + \aaabbabb \ , 
	  \quad
	 \ab \triangleright\! \aababb =  2\aaabbabb + \aabababb.
\end{equation*}
Indeed, given trees $\tau_1,\tau_2,\tau_3 \in \mathcal{T}$ the expression $\tau_1 \triangleright (\tau_2 \triangleright \tau_3) - (\tau_1 \triangleright \tau_2)\triangleright \tau_3$ is the sum of all the trees obtained by grafting $\tau_1$ and $\tau_2$ at two distinct vertices of $\tau_3$. It is thus symmetric in $\tau_1$ and $\tau_2$, and hence the linear span of rooted trees is a left pre-Lie algebra. Let $\Omega$ be a set. The free pre-Lie algebra $\mathcal{P}(\Omega)$ in $|\Omega|$ generators amounts to non-planar rooted trees decorated by elements from $\Omega$. We denote the set of $\Omega$-decorated non-planar rooted trees by $\mathcal{T}_\Omega$. The free pre-Lie algebra $\mathcal{P}(\Omega)$ satisfies the universal property: for any pre-Lie algebra $(Q,\curvearrowright)$ and any mapping $v: \Omega \to Q$ there exists unique pre-Lie algebra morphism $\tilde{v}:  \mathcal{P}(\Omega) \to Q$, such that $f=\tilde{f} \circ \iota$, where $\iota: \Omega \to \mathcal{P}(\Omega)$. The elementary differential $\mathfrak{F}_f$ in \eqref{eq:CODEsolution} is such a pre-Lie algebra morphism, extending the map $v(\ab_i):=f_i$ for $i \in \Omega=\{0,\ldots,d\}$.


\subsection{BCK Hopf algebra and GL product}
\label{ssect:BCKGL}

A $\Omega$-decorated rooted forest is a finite collection $F=(\tau_1,\ldots, \tau_n)$ of rooted trees from $\mathcal{T}_\Omega$, which we denote by the commutative product $\tau_1 \cdots \tau_n$. The set of $\Omega$-decorated forests is denoted $\mathcal{F}_\Omega$ and contains the empty forest $\mathbf{1}=\{\emptyset\} \in \mathcal{F}_\Omega$. The operator $B^i_+$, $i \in \Omega$, associates to the forest $F$ the tree $B^i_+(F) \in \mathcal{T}_\Omega$ obtained by grafting each tree in $F$ on a common new root decorated by $i$. The unique rooted tree $\ab_i=B^i_+(\mathbf{1})$. Recall the definition of several important numbers associated to a rooted forest $F= \tau_1 \cdots \tau_n$ respectively the tree $\tau=B^i_+(F)$. The number of vertices $|V(\tau)|$ is the sum given by $|\tau|:=1+\sum_{j=1}^{n} |\tau_j|$. The tree factorial is recursively defined with respect to the number of vertices, i.e.,  by $\ab_i !=1$ and $\tau!=|\tau|\prod_{j=1}^n\tau_j!$. It is multiplicatively extended to forests, $F! = \tau_1! \cdots \tau_n!$. The internal symmetry factor $\sigma(F)=\prod_{j=1}^n|\mathrm{Aut}(\tau_j)|$. The Connes--Moscovici coefficient $\mathrm{cm}(\tau)$ of a tree $\tau$ is defined by
\begin{equation}
\label{CM}
	\mathrm{cm}(\tau)=\frac{|\tau|!}{\tau!\sigma(\tau)}.
\end{equation}
$$
	\mathrm{cm}(\ab)=1,
	\quad
	\mathrm{cm}(\hspace{-0.1cm}\begin{array}{c}\aabb\end{array}\hspace{-0.1cm})=1,
	\quad
	\mathrm{cm}(\hspace{-0.1cm}\begin{array}{c}\aababb\end{array}\hspace{-0.1cm})=1,
	\quad
	\mathrm{cm}(\hspace{-0.1cm}\begin{array}{c} \aaababbb\end{array}\hspace{-0.1cm})=1,
	\quad
	\mathrm{cm}(\hspace{-0.1cm}\begin{array}{c}\aaabbabb\end{array}\hspace{-0.1cm})=3.
$$
For the last two coefficients we used that the symmetry factor and tree factorial of the tree $B_+(B_+(B_+(\mathbf{1})B_+(\mathbf{1})))$ are $2$ respectively $12$, whereas for the tree $B_+(B_+(B_+(\mathbf{1})) B_+(\mathbf{1}))$ they are one respectively $8$. An important result is the following lemma:

\begin{lemma}\cite{Brouder00}\label{cm}
The Connes--Moscovici coefficient of $\tau \in \mathcal{T}$ is characterised by
\[
	L_{\ab\,\triangleright}^{n-1}(\ab) = \sum_{\tau \in \mathcal{T} \atop |\tau|=n} \mathrm{cm}(\tau)\tau,
\]
where the sum on the right runs over all trees in $\mathcal{T}$ with exactly $n$ vertices.
\end{lemma}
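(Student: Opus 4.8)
The plan is to induct on $n = |\tau|$ and push the recursion $L^{n-1}_{\ab\triangleright}(\ab) = \ab \triangleright L^{n-2}_{\ab\triangleright}(\ab)$ through the grafting product \eqref{treepreLie}. Write $c(\tau)$ for the coefficient of $\tau$ on the left-hand side of the asserted identity. The case $n=1$ reads $c(\ab) = 1 = \mathrm{cm}(\ab)$, and for $n \geq 2$ the inductive hypothesis together with \eqref{treepreLie} gives
\[
c(\tau) = \sum_{|\sigma| = n-1} \mathrm{cm}(\sigma)\, M^{\ab}_{\sigma;\tau},
\qquad
M^{\ab}_{\sigma;\tau} = \#\{v \in V(\sigma) : \ab \to_v \sigma \cong \tau\},
\]
so the whole statement reduces to identifying this sum with $\mathrm{cm}(\tau) = |\tau|!/(\tau!\,\sigma(\tau))$ of \eqref{CM}.

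Two elementary lemmas do the job. First, a symmetry-factor identity exchanging grafting and pruning:
\[
M^{\ab}_{\sigma;\tau}\,\sigma(\tau) = n(\tau,\sigma)\,\sigma(\sigma),
\qquad
n(\tau,\sigma) := \#\{\ell \in \mathcal{L}(\tau) : \tau \setminus \ell \cong \sigma\},
\]
where $\mathcal{L}(\tau)$ is the leaf set of $\tau$ and $\tau \setminus \ell$ denotes $\tau$ with the leaf $\ell$ and its incident edge deleted. This is proved by orbit counting: sending a leaf $\ell$ to the vertex of $\tau\setminus\ell$ it was grafted onto, and a vertex $v$ to the leaf freshly grafted onto it, sets up a bijection between $\mathrm{Aut}(\tau)$-orbits of leaves $\ell$ with $\tau\setminus\ell \cong \sigma$ and $\mathrm{Aut}(\sigma)$-orbits of vertices $v$ with $\ab \to_v \sigma \cong \tau$; for corresponding orbits the stabilisers $\mathrm{Stab}_{\mathrm{Aut}(\tau)}(\ell)$ and $\mathrm{Stab}_{\mathrm{Aut}(\sigma)}(v)$ have the same order (an automorphism fixing $\ell$ fixes its parent and restricts to one fixing $v$, and conversely extends uniquely), and the orbit--stabiliser theorem yields the displayed equality. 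Second, a tree-factorial identity,
\[
\sum_{\ell \in \mathcal{L}(\tau)} \frac{1}{(\tau\setminus\ell)!} = \frac{|\tau|}{\tau!},
\]
proved by a short induction on $\tau = B_+(\tau_1\cdots\tau_m)$: a leaf of $\tau$ lies in a unique $\tau_i$, and the recursive definition $\tau! = |\tau|\prod_j\tau_j!$ gives $\tau!/(\tau\setminus\ell)! = \tfrac{|\tau|}{|\tau|-1}\,\tau_i!/(\tau_i\setminus\ell)!$, so summing and applying the inductive hypothesis to each $\tau_i$ converts $\sum_i |\tau_i| = |\tau|-1$ into $|\tau|$.

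Granting these, and using $\mathrm{cm}(\sigma)\sigma(\sigma) = |\sigma|!/\sigma! = (n-1)!/\sigma!$, one rewrites the sum over $\sigma$ as a sum over the leaves of $\tau$ (grouping leaves by the isomorphism class of $\tau\setminus\ell$):
\[
c(\tau)
= \frac{(n-1)!}{\sigma(\tau)} \sum_{|\sigma|=n-1} \frac{n(\tau,\sigma)}{\sigma!}
= \frac{(n-1)!}{\sigma(\tau)} \sum_{\ell \in \mathcal{L}(\tau)} \frac{1}{(\tau\setminus\ell)!}
= \frac{(n-1)!}{\sigma(\tau)} \cdot \frac{n}{\tau!}
= \mathrm{cm}(\tau),
\]
closing the induction. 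I expect the symmetry-factor identity to be the only genuinely delicate point, since both $M^{\ab}_{\sigma;\tau}$ and $n(\tau,\sigma)$ count unpointed trees whereas the natural correspondence is between pointed ones, so the automorphism bookkeeping has to be done carefully. As a less self-contained alternative one may instead recognise both sides as the exact-flow expansion of $\dot y = y$: the left-hand side is the $n$-th Taylor coefficient of the pre-Lie exponential \eqref{preLieVect2} with $f = \ab$, while in the B-series of the exact flow of an arbitrary vector field the tree $\tau$ carries the classical weight $1/(\sigma(\tau)\tau!)$, and matching the two through the universal property of $\mathcal{P}(\ab)$ gives the result at once.
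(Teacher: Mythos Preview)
The paper does not supply a proof of this lemma; it is quoted from \cite{Brouder00} and followed only by a list of small-order examples. Your argument is therefore not to be compared against anything in the present paper, and it is correct as written. The induction scheme is the natural one, and both auxiliary identities hold: the symmetry-factor identity $M^{\ab}_{\sigma;\tau}\,\sigma(\tau)=n(\tau,\sigma)\,\sigma(\sigma)$ is exactly the orbit--stabiliser argument you sketch (the key point, that an automorphism of $\tau$ fixing the leaf $\ell$ restricts to an automorphism of $\tau\setminus\ell$ fixing the parent vertex and conversely extends uniquely, is sound), and the tree-factorial recursion $\sum_{\ell}1/(\tau\setminus\ell)!=|\tau|/\tau!$ follows from $\tau!=|\tau|\prod_j\tau_j!$ just as you say. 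The final chain of equalities is then a clean rewriting.

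Your closing remark is apt: the less self-contained route you mention---matching both sides against the $B$-series expansion of the exact flow, where the coefficient of $\tau$ is $1/(\sigma(\tau)\tau!)$---is essentially the viewpoint in Brouder's original derivation and is also how the present paper implicitly uses the result (cf.\ the pre-Lie flow \eqref{preLieVect2} and the $B$-series discussion in \textsection\ref{sect:duality}). Your direct combinatorial proof has the advantage of being independent of any analytic interpretation, at the cost of the automorphism bookkeeping you correctly flag as the delicate step. One cosmetic point: you use $\sigma$ both for the symmetry factor $\sigma(\cdot)$ and as a dummy tree variable in the same display, which is readable but mildly unfortunate.
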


\begin{align*}
	L_{\ab\,\triangleright}^{1}(\ab)&=\ab \triangleright \ab = \aabb\\
	L_{\ab\,\triangleright}^{2}(\ab)&=\ab \triangleright(\ab \triangleright \ab) = \aababb + \aaabbb\\
	L_{\ab\,\triangleright}^{3}(\ab)&=\ab \triangleright(\ab \triangleright(\ab \triangleright \ab))
	= 3\aaabbabb + \aabababb + \aaaabbbb + \aaababbb\\
\end{align*}

\medskip

The $\Omega$-decorated Butcher--Connes--Kreimer Hopf algebra $\mathcal{H}^\Omega_{BCK}$ of rooted forests is the free unital commutative $k$-algebra on the linear space $\mathcal{T}_\Omega$ \cite{CK98,Manchon08}. It is graded by the number of vertices. The non-cocommutative coproduct is defined for the tree $\tau=B^i_+(F) \in \mathcal{T}_\Omega$ by
$$
	\Delta(\tau) := \tau \otimes \mathbf{1} + (\mathrm{id} \otimes B^i_+)\Delta(F), 
$$ 
where $\Delta(F) := \Delta( \tau_1) \cdots \Delta( \tau_n)$ and $\Delta(\mathbf{1}):=\mathbf{1} \otimes \mathbf{1}$. Observe that the space spanned by ladder trees $B^{i_1}_+ \circ \cdots \circ B^{i_l}_+(\mathbf{1})$ forms a cocommutative Hopf subalgebra in $\mathcal{H}^\Omega_{BCK}$ with the simple coproduct
$$
	\Delta(B^{i_1}_+ \circ \cdots \circ B^{i_l}_+(\mathbf{1}))=
	\sum_{m = 0}^l B^{i_1}_+ \circ \cdots \circ B^{i_m}_+(\mathbf{1}) 
	\otimes B^{i_{m+1}}_+ \circ \cdots \circ B^{i_{l}}_+(\mathbf{1}),
$$
where it is understood that $B^{i_1}_+ \circ \cdots \circ B^{i_0}_+(\mathbf{1}) = B^{i_{l+1}}_+ \circ \cdots \circ B^{i_{l}}_+(\mathbf{1}) =\mathbf{1}$.

We denote by $(d_F)_{F \in \mathcal{F}_\Omega}$ the normalised dual basis in the graded dual ${\mathcal  H}_{BCK}^{\Omega *}$ of the forest basis of ${\mathcal  H}^\Omega_{BCK}$, i.e., $\langle d_{F},G \rangle = \sigma(F)$ when the forests $F$ and $G$ coincide, and zero else. For any tree $\tau \in \mathcal{T}_\Omega$ the corresponding $d_\tau$ is an infinitesimal character over ${\mathcal  H}^\Omega_{BCK}$, in other words, it is a primitive element of ${\mathcal  H}_{BCK}^{\Omega *}$. The convolution product of ${\mathcal  H}_{BCK}^{\Omega *}$ gives rise to the Lie bracket $[d_{\tau_1},d_{\tau_2}]=d_{\tau_1} \ast d_{\tau_2} - d_{\tau_2} \ast d_{\tau_1}$ for primitives in ${\mathcal  H}_{BCK}^{\Omega *}$. The link with the pre-Lie product \eqref{treepreLie} on rooted trees follows from \cite{CK98}
\begin{equation}
	d_{\tau_1} \ast d_{\tau_2} - d_{\tau_2} \ast d_{\tau_1} 
	= d_{{\tau_1} \triangleright {\tau_2} - {\tau_2} \triangleright {\tau_1}}.
\end{equation}
By the Cartier--Milnor--Moore theorem ${\mathcal  H}_{BCK}^{\Omega *}$ is isomorphic as a Hopf algebra to the enveloping algebra $\mathcal{U}(\mathfrak{g}_\Omega)$ \cite{Cartier07}, where $\mathfrak{g}_\Omega:=\mathrm{Prim}({\mathcal  H}_{BCK}^{\Omega *})$ is the Lie algebra spanned by the $d_{\tau}$'s for rooted trees ${\tau} \in \mathcal{T}_\Omega$. Guin and Oudom \cite{OudomGuin08} extended the pre-Lie product on $\mathcal{P}(\Omega)$ to the symmetric module $\mathcal{S}(\mathfrak{g}_\Omega)$. Then they defined another associative product on $\mathcal{S}(\mathfrak{g}_\Omega)$ and showed that $(\mathcal{S}(\mathfrak{g}_\Omega),\Delta,*)$ is isomorphic as a Hopf algebra to $\mathcal{U}(\mathfrak{g}_\Omega)$. The associative product is defined for $F,G \in \mathcal{S}(\mathfrak{g}_\Omega)$ by 
\begin{equation}
\label{def:preLie}
	F * G := F_{(1)}(F_{(2)} \triangleright G),
\end{equation}
where, using Sweedler's notation, $\Delta_\shuffle(F)=\sum F_{(1)} \otimes F_{(2)}$ is the usual unshuffle coproduct. This product \eqref{def:preLie} can be written in terms of the $B_+$ operation, i.e.,  $F \ast G=B^i_-(F \triangleright B^i_+(G))$, i.e., by summing all the trees obtained by grafting the trees in the forest $F$ on the tree $B^i_+(G)$ at various places, and then removing the root of each component of the sum, which is the definition of the $B^i_-$-operation. Note that the decoration of $B_\pm$ does not matter here. For trees $\tau_1, \tau_2 \in \mathcal{T}_\Omega$ we find
$$
	\tau_1 * \tau_2 = \tau_1\tau_2 + \tau_1 \triangleright \tau_2, 
$$ 
such that, for instance
$$
	\ab_i * \ab_j = \ab_i\ \ab_j + \ab_i \triangleright \ab_j =  \ab_i\ \ab_j + \aabb^i_j. 
$$
As a result the product $\ast$ on $\mathcal  S(\mathfrak{g}_\Omega)$ coincides with the Grossman--Larson product
\begin{equation}
\label{GL}
	d_{F \ast G}=d_F \ast d_G,
\end{equation}
where the product on the right-hand side is the convolution product in ${\mathcal  H}_{BCK}^{\Omega *}$.


\subsection{Shuffle and quasi-shuffle Hopf algebras}
\label{ssect:shuffle}

Algebraically, products of iterated Stratonovich integrals can be described using the usual shuffle product \cite{Reutenauer93}, whilst for the other stochastic integrals we obtain a quasi-shuffle relation. We refer the reader to Gaines' 1994 paper \cite{Gaines94} for more details. 

\smallskip

Following Hoffman \cite{Hoffman00} a quasi-shuffle algebra is defined on a locally finite alphabet $A$. By $A^*$ we denote the monoid of words $w=i_1 \cdots i_l$ generated by the letters from $A$ with concatenation as associative product. Moreover, we assume that $A$ itself is a commutative semigroup with binary product $[-\ -] \colon  A \times A \to A$. Commutativity and associativity of the latter allows us to write $[{i_1} \cdots  {i_{n}}] :=[{i_1} [\cdots [{i_{n-1}}{i_{n}}]]\cdots]$. The free non-commutative $k$-algebra of words $w={i_1} \cdots {i_l}$ over the alphabet $A$ is denoted $k\langle A\rangle$. The empty word is $\mathbf{1} \in k\langle A\rangle$. The length $|u|$ of a word $u \in k\langle A\rangle$ is defined by its number of letters. The commutative and associative quasi-shuffle product on words $w,v \in k\langle A\rangle$ is defined by 
\begin{enumerate}
 	\item $\mathbf{1} \star v :=v \star \mathbf{1} := v$,
 	\item $i v \star j w := i(v \star j w) + j(i v \star w) + [i  j] (v \star w)$,
\end{enumerate}
where $i,j \in A$. We denote the quasi-shuffle algebra by $\mathcal{H}_\star:=(k\langle A\rangle,\star)$. Hoffman \cite{Hoffman00} showed that $\mathcal{H}_\star=(k\langle A\rangle,\triangle,\star)$ is a Hopf algebra with respect to the deconcatenation coproduct $\triangle$. If $[i j]=0$ for any letters $i,j \in A$, then the quasi-shuffle product reduces to the ordinary shuffle product $\shuffle \colon k\langle A\rangle \times k\langle A\rangle \to k\langle A\rangle$
$$
	i v \shuffle j w:= i(v \shuffle jw) + j(i v \shuffle w),
$$
and $H_\star$ turns into the classical shuffle Hopf algebra $\mathcal{H}_\shuffle:=(k\langle A\rangle,\triangle,\shuffle)$. Further below we shall see another, more surprising link between $H_\star$ and $H_\shuffle$ due to Hoffman \cite{Hoffman00}. For $i \in A$, define the operator $R^i : \mathcal{H}_\star \to \mathcal{H}_\star$ by $R^i (w)=w i$. It verifies with respect to the deconcatenation coproduct:
$$
	\triangle\big(R^i(w)\big) = R^i(w)\otimes\mathbf{1} + (\mathrm{id} \otimes R^i)\triangle(w).
$$


\subsubsection{Arborifications}
\label{sssect:arborif}

As before, let $A$ be the alphabet which is also a commutative semigroup with binary product $[-\ -]$ introduced above. Generally, the process of (contracting) arborification is given by a surjective Hopf algebra morphism from the $A$-decorated Butcher--Connes--Kreimer Hopf algebra $\mathcal{H}^A_{BCK}$ onto the {(quasi-)}\- shuffle Hopf algebra ($\mathcal{H}_{\star}$) $\mathcal{H}_{\shuffle}$ defined over the alphabet $A$. See \cite{EMF16,FM17} for details. In the shuffle case, the arborification morphism $\mathfrak{a}: \mathcal{H}^A_{BCK} \to \mathcal{H}_{\shuffle}$ is defined by $\mathfrak{a} \circ B^i_+ := R^i \circ \mathfrak{a}$, i.e., 
\begin{equation}
\label{arborif}
	\mathfrak{a}(B^i_+(F))=(\mathfrak{a}(\tau_1 ) \shuffle \cdots \shuffle \mathfrak{a}(\tau_n)) i.
\end{equation}

In the quasi-shuffle case, the contracting arborification  morphism $\mathfrak{a}^c : \mathcal{H}^A_{BCK} \to \mathcal{H}_{\star}$ is defined analogously by $\mathfrak{a}^c \circ B^i_+ := R^i \circ \mathfrak{a}^c$, that is,
\begin{equation}
\label{arborifc}
	\mathfrak{a}^c(B^i_+(F))=(\mathfrak{a}^c(\tau_1 ) \star \cdots \star \mathfrak{a}^c(\tau_n))i.
\end{equation}
For instance, both arborifications map decorated ladder trees to single words, e.g., $\mathfrak{a}(B^{i_1}_+ \circ \cdots \circ B^{i_l}_+(\mathbf{1}))=\mathfrak{a}^c(B^{i_1}_+ \circ \cdots \circ B^{i_l}_+(\mathbf{1}))={i_l} \cdots {i_1}$. Non-ladder trees are mapped to linear combinations of words, e.g.,   
$$
	\mathfrak{a}^c(B^l_+(\ab_i \ \ab_j))
	=(\mathfrak{a}(\ab_i )  \star \mathfrak{a}(\ab_j))l 
	=(i \star j)l=ijl + jil + [i j]l
$$
and 
$$
	\mathfrak{a}(B^l_+(\ab_i \ \ab_j))
	=(\mathfrak{a}(\ab_i )  \shuffle \mathfrak{a}(\ab_j))l 
	=(i \shuffle j)l=ijl + jil.
$$
Observe that the difference between the two maps are extra terms coming from contractions.


\subsection{Substitution bialgebra}
\label{ssect:subsHopf}

Let $A$ be the locally finite commutative semigroup from above. Let $\mathcal{F}^A_+$ be the set of $A$-decorated forests excluding the empty forest, and consider the commutative polynomial algebra $\mathcal{H}^A_+$ graded by the number of edges. A subforest of a rooted tree $\tau$ is a collection $(\tau_1,\ldots ,\tau_n)$ of pairwise disjoint rooted subtrees of $\tau$. The decorations of each tree $\tau_j$ is induced by the decoration of $\tau$. In particular, two subtrees of a subforest cannot have any common vertex. A full subforest $\overline{F}=\tau_1\cdots \tau_k$ of $\tau$ is such that $V(\overline{F}) = V(\tau)$. The coproduct on $\mathcal{H}^A_+$ is defined in terms of extractions and contractions
\begin{equation}
\label{eq:subCoprod}
	\delta^+(\tau)=\sum_{\overline{F} \subseteq \tau} \overline{F} \otimes \tau/\overline{F}.
\end{equation}
Here the sum is over all $A$-decorated full subforests $\overline{F}=\tau_1\cdots \tau_k$ and $\tau/\overline{F}$ denotes the $A$-decorated rooted tree in which all connected components $\tau_j$ of $\overline{F}$ have been contracted and replaced by vertices $\ab_{[\tau_j]}$. Hence, for $\tau \in \mathcal{T}_A$ the coproduct $\delta^+(\tau)$ is linear on the right. The decoration $[\tau_j]:=[{j_1} \cdots {j_{|\tau_j|}}] \in A$, where $j_1,\ldots, j_{|\tau_j|}$ are the decorations of the vertices of the subtree $\tau_j$. Note that if $\tau_j = \ab_{i_j}$ then $\ab_{[\tau_j]}=\ab_{i_j}$, since $[{i_j}]={i_j}$. A few examples show that the number of edges is preserved
\begin{eqnarray*}
	\delta^+(\ab_i)	\!\!\!&=&\!\!	\ab_i \otimes \ab_i\\	
	\delta^+(\!\!\begin{array}{c}\aabb^i_j\end{array}\!\!\!)	
	\!\!\!&=&\hspace{-.3cm} \begin{array}{c}\aabb^i_j \end{array}\!\!\! \otimes \ab_{[i j]} 
			+ \ab_i\ \ab_j \otimes \!\! \begin{array}{c}\aabb^i_j\end{array}\\
	\delta^+(\hspace{-.3cm}\begin{array}{c}{\phantom{\aabb}}^{i_1}\aababb^{i_2}_{\!\!\!i_3}\end{array}\hspace{-.2cm})	
	\!\!\!&=&\hspace{-.6cm} \begin{array}{c}{\phantom{\aabb}}^{i_1}\aababb^{i_2}_{\!\!\!i_3}\end{array} \hspace{-.2cm}\otimes \ab_{[{i_1}{i_2}{i_3}]} 
	+ \ab_{{i_1}}\ab_{{i_2}}\ab_{{i_3}} \otimes\hspace{-.4cm} \begin{array}{c}{\phantom{\aabb}}^{i_1}\aababb^{i_2}_{\!\!\!i_3}\end{array} \hspace{-.2cm}
	+ \hspace{-.1cm} \begin{array}{c}\aabb^{i_1}_{i_3}\end{array} \hspace{-.2cm}\ab_{{i_2}} \otimes \hspace{-.2cm} \begin{array}{c}\aabb^{i_2}_{[i_1i_3]}\end{array}\hspace{-.2cm} 
	+ \hspace{-.1cm} \begin{array}{c}\aabb^{i_2}_{i_3}\end{array} \hspace{-.2cm}\ab_{{i_1}} \otimes  \hspace{-.2cm} \begin{array}{c}\aabb^{i_1}_{[i_2i_3]}\end{array}
\end{eqnarray*}
The algebra $\mathcal{H}^A_+$ together with this coproduct is a connected graded bialgebra. Next we consider the space of ladder trees, which we will denote by $\ell_{i_1 \cdots i_l}$. The vertices are decorated successively starting from the leaf, decorated by $i_1$, down to the root, which is decorated by $i_l$. 

\begin{proposition} \label{prop:laddertrees}
The space spanned by ladder trees forms a Hopf subalgebra in $\mathcal{H}^A_{+}$ with the coproduct
$$
	\delta^+(\ell_{i_1 \cdots i_l})=
	\sum_{I_1, \ldots, I_n \in A^* \atop I_1 \cdots I_n=i_1 \cdots i_l} \ell_{I_1} \cdots \ell_{I_n} \otimes \ell_{[I_1] \cdots [I_n]}.  
$$
The sum runs over all partitions of the decoration sequence $i_1 \cdots i_l \in A^*$ into blocks $I_j \in A^*$ such that the concatenation $I_1 \cdots I_n = i_1 \cdots i_l$.  
\end{proposition}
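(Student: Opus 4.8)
The plan is to read off the formula directly from the definition \eqref{eq:subCoprod}, by enumerating the full subforests of a ladder tree; closure under the product and the Hopf property then come essentially for free.

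\emph{Step 1: the full subforests of a ladder.} Write $\ell := \ell_{i_1\cdots i_l}$, whose underlying graph is the path $v_1 - v_2 - \cdots - v_l$ with $v_1$ the leaf (decorated $i_1$) and $v_l$ the root (decorated $i_l$). Any connected subgraph of a path is a sub-path, so every rooted subtree of $\ell$ is a segment $v_a, v_{a+1},\ldots, v_b$ of consecutive vertices, carrying from leaf to root the decoration word $i_a i_{a+1}\cdots i_b$. Hence a full subforest $\overline F$ of $\ell$ — a partition of $V(\ell)$ into pairwise disjoint rooted subtrees — is exactly the datum of a factorisation $i_1\cdots i_l = I_1 I_2\cdots I_n$ into nonempty words $I_k \in A^*$, the $k$-th component being the ladder $\ell_{I_k}$, so that $\overline F = \ell_{I_1}\cdots\ell_{I_n}$; each such factorisation arises exactly once. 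This is precisely the index set of the sum in the statement.

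\emph{Step 2: the contraction.} For $\overline F = \ell_{I_1}\cdots\ell_{I_n}$ as above, the tree $\ell/\overline F$ is obtained by collapsing each segment $\ell_{I_k}$ to one vertex decorated $[I_k] = [j_1\cdots j_m]$ with $j_1\cdots j_m = I_k$, which is well defined by commutativity and associativity of the semigroup product on $A$. Collapsing disjoint consecutive segments of a path again yields a path, and the order of the segments is preserved: the vertex replacing $\ell_{I_1}$ (which contains the original leaf $v_1$) is the new leaf, the vertex replacing $\ell_{I_n}$ (which contains the original root $v_l$) is the new root. Thus $\ell/\overline F = \ell_{[I_1][I_2]\cdots[I_n]}$, and substituting Steps 1 and 2 into \eqref{eq:subCoprod} gives the asserted formula for $\delta^+(\ell_{i_1\cdots i_l})$. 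As a bookkeeping check, $\overline F$ has $\sum_k(|I_k|-1) = l-n$ edges and $\ell/\overline F$ has $n-1$ edges, totalling $l-1 = |E(\ell)|$, consistent with the grading.

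\emph{Step 3: the Hopf subalgebra.} Let $\mathcal L \subseteq \mathcal H^A_+$ be the subalgebra generated by the ladder trees, i.e.\ the span of forests all of whose components are ladders; it is a connected graded subspace. The formula of Step 2 together with multiplicativity of $\delta^+$ shows $\delta^+(\mathcal L)\subseteq \mathcal L\otimes\mathcal L$ — in fact $\delta^+$ sends a single ladder into (forests of ladders)$\otimes$(ladders) — so $\mathcal L$ is a subbialgebra of the connected graded bialgebra $\mathcal H^A_+$, hence a connected graded bialgebra and therefore a Hopf algebra, the antipode being recovered recursively by degree. \emph{Main obstacle.} There is no real difficulty here; the only points needing care are the combinatorial bookkeeping of Steps 1--2 — that the rooted subtrees of a ladder are exactly its consecutive segments (so the sum runs over ordered factorisations of $i_1\cdots i_l$, with nothing missing and nothing repeated) and that after contraction the blocks reappear along the new ladder in the original leaf-to-root order, each decorated by the iterated bracket $[I_k]$ rather than in some other order. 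One could instead argue by induction on $l$, peeling off the component of $\overline F$ containing the root, but the direct enumeration above seems cleaner.
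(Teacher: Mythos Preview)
Your proof is correct and follows the same approach as the paper, which merely asserts in one line that ``the partition into blocks of the decoration sequence $i_1\cdots i_l$ corresponds to contractions on the ladder tree $\ell_{i_1\cdots i_l}$''; you have simply filled in the combinatorial details (Steps~1--2) that the paper leaves implicit, and added the closure argument (Step~3) that the paper omits entirely. One small caveat: your Step~3 invokes connectedness of $\mathcal H^A_+$, which the paper asserts at this point but later retracts (``$\mathcal H^A_+$ is a graded but not connected bialgebra''), so strictly speaking the antipode argument needs the pseudo-antipode discussed after Lemma~\ref{lem:Hoff1} rather than the standard connected-graded recursion --- but this is an inconsistency in the paper, not in your reading of it.
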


\begin{proof}
The partition into blocks of the decoration sequence $i_1 \cdots i_l$ corresponds to contractions on the ladder tree $\ell_{i_1 \cdots i_l}$. We will later see that these partitions can be understood in terms of compositions of integers. Recall that for single letters we have $[i_j]=i_j$.  
\end{proof}

The so-called substitution bialgebra $\mathcal{H}_+$ corresponding to undecorated rooted trees appeared in \cite{CEFM11}, where it was introduced in relation to backward error analysis on $B$-series. It is based on the seminal works \cite{CHV07,CHV10} and aims at providing an algebraic description of the effect of substituting a vector field corresponding to an infinitesimal character in the dual $\mathcal{H}^*_+$ into another $B$-series. It was shown in \cite{CEFM11} that there exists a left $\mathcal{H}_+$-bicomodule structure on the Butcher--Connes--Kreimer Hopf algebra $\mathcal{H}_{BCK}$, that is, $\Phi$: $\mathcal H_{BCK} \to \mathcal H_+ \otimes \mathcal H_{BCK}$, such that for $\tau \not= \mathbf{1}$ 
$$
	\Phi(\tau)=\delta^+(\tau) \in \mathcal{H}_+ \otimes \mathcal H_{BCK},
$$ 
and $\Phi(\mathbf{1})=\ab\otimes \mathbf{1}$. Let $\varphi$ be a character of $\mathcal H_+$, let $\alpha$ be any linear map from $\mathcal H_+$ into $k$, and let $b,c$ be linear maps form $\mathcal H_{BCK}$ into $k$. Let $\varepsilon$ be the co-unit of $\mathcal H_{BCK}$ and $d_{\ab}$ on $\mathcal H_{BCK}$ the infinitesimal character corresponding to the one-vertex rooted tree $\ab$, and $Z_{\ab}$ is the analogous infinitesimal character on $\mathcal H_+$. Then:
 \allowdisplaybreaks{
\begin{eqnarray*}
	\alpha \circledast  \varepsilon		&=& \alpha(\ab)\varepsilon\\
	\alpha \circledast  Z_{\ab}			&=& Z_{\ab} \circledast \alpha=\alpha\\
	Z_{\ab} \circledast  b			&=& b\\
	\varphi \circledast  (b \ast c)	&=&(\varphi \circledast  b)\ast(\varphi \circledast  c).
\end{eqnarray*}}
Here $\ast$ is the convolution product on the dual of $\mathcal H_{BCK}$ and $\circledast$ denotes the convolution product on $\mathcal{H}^*_+$, defined in terms of the coproduct \eqref{eq:subCoprod}, as well as the left action of $\mathcal H_+$ on $\mathcal H_{BCK}$. These results are generalised to the $A$-decorated case $\mathcal{H}^A_+$. The most important consequence of the interplay of the substitution bialgebra and the Butcher--Connes--Kreimer Hopf algebra is the following:

\begin{lemma}\cite{CEFM11}\label{sub:action}
Let $v$ be a linear map from $\mathcal{H}^A_{+}$ to $k$. Define the $\mathcal{H}^A_{BCK}$ endomorphism induced by $v$
$$
	\Psi_v := (v \otimes \mathrm{id}) \circ \Phi.
$$ 
If $v$ is a character of $\mathcal{H}^A_+$, then $\Psi_v$ is a Hopf algebra automorphism on $\mathcal{H}^A_{BCK}$.
\end{lemma}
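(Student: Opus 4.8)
The plan is to verify three properties of $\Psi_v$ --- that it is an algebra morphism, that it is a coalgebra morphism, and that it is bijective --- from which the claim follows: since $\mathcal{H}^A_{BCK}$ is a connected graded Hopf algebra, any bialgebra endomorphism of it automatically commutes with the antipode (the latter being the convolution inverse of the identity, which every bialgebra map preserves), so a bijective bialgebra endomorphism is a Hopf algebra automorphism. That $\Psi_v$ is an algebra morphism is the easy part: on trees the coaction $\Phi$ coincides with $\delta^+$, and $\Phi(\mathbf{1})=\ab\otimes\mathbf{1}$ is the unit of $\mathcal{H}^A_+\otimes\mathcal{H}^A_{BCK}$; since $\delta^+$ is a bialgebra coproduct it is multiplicative, and $\mathcal{H}^A_{BCK}$ is freely generated as a commutative algebra by $\mathcal{T}_A$, so $\Phi$ is a unital algebra morphism $\mathcal{H}^A_{BCK}\to\mathcal{H}^A_+\otimes\mathcal{H}^A_{BCK}$. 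As $v$ is a character, $v\otimes\mathrm{id}$ is a unital algebra morphism onto $\mathcal{H}^A_{BCK}$, hence so is the composite $\Psi_v=(v\otimes\mathrm{id})\circ\Phi$.

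Next, $\Psi_v$ is a coalgebra morphism. For $b\in\mathcal{H}^{A*}_{BCK}$ one checks directly that $\varphi\circledast b=b\circ\Psi_\varphi$, so the cointeraction relation $\varphi\circledast(b\ast c)=(\varphi\circledast b)\ast(\varphi\circledast c)$ of \cite{CEFM11}, in its $A$-decorated form, says exactly that $(b\otimes c)\circ\Delta\circ\Psi_v=(b\otimes c)\circ(\Psi_v\otimes\Psi_v)\circ\Delta$ for all $b,c$; non-degeneracy of the graded pairing then forces $\Delta\circ\Psi_v=(\Psi_v\otimes\Psi_v)\circ\Delta$. (Alternatively, one applies the character $v$ to the two $\mathcal{H}^A_+$-components in the compatibility of $\Phi$ with $\Delta$ from \cite{CEFM11} and uses multiplicativity of $v$.) Counit compatibility holds because $\varepsilon\circ\Psi_v=(v\otimes\varepsilon)\circ\Phi$ vanishes on every $\tau\in\mathcal{T}_A$ --- the contracted tree $\tau/\overline{F}$ always has at least one vertex, so $\varepsilon(\tau/\overline{F})=0$ --- while on $\mathbf{1}$ it gives $v(\ab)\,\varepsilon(\mathbf{1})=1$ since $\ab$ is the unit of $\mathcal{H}^A_+$; hence $\varepsilon\circ\Psi_v=\varepsilon$.

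Finally, $\Psi_v$ is bijective. Coassociativity and counitality of the comodule $\Phi$, namely $(\delta^+\otimes\mathrm{id})\circ\Phi=(\mathrm{id}\otimes\Phi)\circ\Phi$ and $(\varepsilon^+\otimes\mathrm{id})\circ\Phi=\mathrm{id}$ (with $\varepsilon^+$ the counit of $\mathcal{H}^A_+$), yield $\Psi_v\circ\Psi_w=\Psi_{w\circledast v}$ for any characters $v,w$ of $\mathcal{H}^A_+$ and $\Psi_{\varepsilon^+}=\mathrm{id}$. Since $\mathcal{H}^A_+$ is a connected graded bialgebra, its characters form a group under $\circledast$; taking $w=v\circ S^+$, the $\circledast$-inverse of $v$, gives $\Psi_v\circ\Psi_w=\Psi_w\circ\Psi_v=\Psi_{\varepsilon^+}=\mathrm{id}$, so $\Psi_v$ is invertible with inverse $\Psi_{v\circ S^+}$, and the proof is complete. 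The only point requiring genuine work is the $A$-decorated version of the structural results of \cite{CEFM11} invoked above --- that $\Phi$ is a left comodule over $(\mathcal{H}^A_+,\delta^+)$, compatible with both the product and the coproduct of $\mathcal{H}^A_{BCK}$ --- whose proofs reduce to checking that the decoration $[\tau_j]\in A$ of a contracted subtree is produced consistently when a full subforest is refined by an admissible cut, i.e.\ routine bookkeeping with the semigroup law of $A$. Everything else is formal.
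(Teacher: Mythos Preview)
The paper does not prove this lemma here; it is quoted from \cite{CEFM11}, with the remark that ``These results are generalised to the $A$-decorated case $\mathcal{H}^A_+$.'' Your argument follows the expected structural lines, but there is one genuine gap in the bijectivity step.

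You write that ``$\mathcal{H}^A_+$ is a connected graded bialgebra'' and deduce that its characters form a group under $\circledast$, whence $\Psi_v$ is invertible with inverse $\Psi_{v\circ S^+}$. But the paper states explicitly, just after Lemma~\ref{lem:Hoff1}, that ``$\mathcal{H}^A_+$ is a graded but \emph{not} connected bialgebra.'' The grading is by number of edges, so degree zero is spanned by all products of the single-vertex trees $\ab_i$, $i\in A$, which are grouplike ($\delta^+(\ab_i)=\ab_i\otimes\ab_i$) but not equal to the unit. Hence there is no antipode $S^+$ in general, and your inversion argument fails as written. Concretely, $\Psi_v(\ab_i)=v(\ab_i)\,\ab_i$, so if $v(\ab_i)=0$ for some $i$ then $\Psi_v$ is not even injective; such a $v$ is a perfectly good character of the polynomial algebra $\mathcal{H}^A_+$. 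The paper handles this by restricting to characters with $v(\ab_i)=1$ for all $i\in A$ and invoking a pseudo-antipode $\alpha=\sum_{i\ge 0}(\mathrm{id}-\eta\varepsilon)^{\circledast i}$ to obtain the $\circledast$-inverse. Your proof needs either this hypothesis and construction, or at minimum the assumption $v(\ab_i)\in k^\times$ together with a direct argument that the convolution inverse then exists (e.g.\ by filtering by edge number and solving recursively).

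A smaller point: your claim that $\Phi(\mathbf{1})=\ab\otimes\mathbf{1}$ is ``the unit of $\mathcal{H}^A_+\otimes\mathcal{H}^A_{BCK}$'' is copied from the undecorated setting of \cite{CEFM11}, where $\ab$ \emph{is} the unit of $\mathcal{H}_+$; in the decorated case there is no single undecorated $\ab$, and the unit of $\mathcal{H}^A_+$ is the polynomial-algebra unit, not any $\ab_i$. This needs adjusting, though it does not affect the algebra-morphism argument once stated correctly. The coalgebra-morphism part of your argument is fine.
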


Observe that the character $v$ on $\mathcal{H}^A_+$ defined to map $\ab_i$ to one, for all $i \in A$, and $v(\hspace{-.2cm} \begin{array}{c}\scalebox{0.5}{\aabb}^{i}_{j}\end{array}\hspace{-.2cm}):=\delta_{ij}\frac{1}{2}$, and any other tree to zero, gives for instance
\begin{eqnarray*}
	v(\!\!\begin{array}{c}\aabb^i_j\end{array}\!\!\!)
	&=& \begin{array}{c}\aabb^i_j\end{array} 
	+  \delta_{ij}\frac{1}{2} \begin{array}{c}\ab_{[ij]}\end{array} \\
	v(\hspace{-.3cm}\begin{array}{c}{\phantom{\aabb}}^{i_1}\aababb^{i_2}_{\!\!\!i_3}\end{array}\hspace{-.2cm})
	&=& \begin{array}{c}{\phantom{\aabb}}^{i_1}\aababb^{i_2}_{\!\!\!i_3}\end{array} 
	+  \delta_{i_1i_3}\frac{1}{2} \begin{array}{c}\aabb^{i_2}_{[i_1i_3]}\end{array}
	+  \delta_{i_2i_3}\frac{1}{2} \begin{array}{c}\aabb^{i_1}_{[i_2i_3]}\end{array}
\end{eqnarray*}


\section{Duality and substitution}
\label{sect:duality}
It is convenient to have a purely algebraic description of the flow, for this purpose we 
define the formal flow map $\varphi \in \mathcal{H}^{\Omega *}_{BCK} \bar{\otimes} \mathcal{H}^{\Omega}_{BCK}$
\[
	\varphi := \sum_{\tau \in \mathcal{T}_{\Omega}} \frac{1}{\sigma(\tau)} \tau \otimes \tau.
\]  
Note the abuse of notation by writing $\tau$ for $d_\tau$. Here $\mathcal{H}^{\Omega *}_{BCK} \bar{\otimes} \mathcal{H}^{\Omega}_{BCK}$ is the completed tensor product, i.e. the space of infinite series of tensor products of forests with Grossman-Larson product on the left and Butcher-Connes-Kreimer on the right, with the inverse limit topology comprising open sets generated by sequences agreeing up to a given order, see \cite{EFLMK,Reutenauer93}. 
The Taylor expansion \eqref{eq:CODEsolution} of a controlled differential equation with driving signal $X$ and vector fields $f_i$ takes the form
\[
	Y_t = \big(\mathfrak{F}_f\otimes X_{st}\big)(\varphi).
\]
Recall that for the empty tree $\mathbf{1} \in \mathcal{T}_{\Omega}$, we have $X^\mathbf{1}_{st}=1$ and $\mathfrak{F}_f[\mathbf{1}]=\mathrm{id}$. 

Given a pre-Lie algebra morphism $g$, we can attempt to find its adjoint $g^*$, defined such that
\[
	\big(\mathfrak{F}_f\otimes X_{st}\big)\big(g\otimes\mathrm{id}\big)(\varphi) 
	= \big(\mathfrak{F}_f\otimes X_{st}\big)\big(\mathrm{id}\otimes g^*\big)(\varphi).
\]
The original context of this problem was substitution of $B$-series. Indeed, suppose $X_{st}=(t-s)$ so that the rough differential equation is a standard ordinary differential equation. Consider increments of a fixed size, i.e., $X_{t,t+h}=h$; the iterated integrals then obey
\[
	\frac{1}{\sigma(\tau)} X_{t,t+h}^\tau = \frac{h^{|\tau|}}{\sigma(\tau)\tau !}.
\]
Letting $v(\tau):=\frac{1}{\tau!}$, the flow $(\mathfrak{F}_f\otimes X_{st})(\varphi)$ thus yields a formal series in powers of the step size $h$,
\[
	(\mathfrak{F}_f\otimes X_{st})(\varphi) 
	= \sum_{\tau \in \mathcal{T}} \frac{h^{|\tau|}}{\sigma(\tau)}\mathfrak{F}_f[\tau] v(\tau) = B(v,f),
\]
which is the Taylor series of the exact solution $Y_t$, i.e., the paradigm of a so-called $B$-series. See \cite{HLW06} for details. Replacing the term $v$ with a functional $a$ gives another $B$-series $B(a,f)$. Suppose then that the vector field $f$ is replaced by an expansion in terms of pre-Lie products of $f$, for instance
\[
	\tilde{f} = f + \frac{h^2}{2}f\triangleright f + \mathcal{O}(h^3).
\] 
Collecting the powers in $h$ appearing in $(\mathcal{F}_{\tilde{f}}\otimes X_{st})(\varphi)$ results in a new series, which may be related algebraically to the $B$-series $B(\frac{1}{\tau!},f)$. Indeed, $\tilde{f}=\frac{1}{h}B(f,a)$ for some infinitesimal character $a$ on $\mathcal{H}_{BCK}$, and we have the following

\begin{lemma}\cite{CEFM11}
Let $a$ be an infinitesimal character and $b$ a character, both defined over $\mathcal{H}_{BCK}$. The substitution of $B$-series is computed using the convolution product in $\mathcal{H}_+^*$:
\[
	B(b,\frac{1}{h}B(a,f))=B(a\circledast b,f).
\]
\end{lemma}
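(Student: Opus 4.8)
The plan is to rephrase both sides of the asserted identity as pairings against the formal flow map $\varphi$ and to reduce the statement to the claim that substituting a $B$-series into the driving vector field is implemented, at the level of elementary differentials, by the coaction $\Phi$ of the substitution bialgebra. Writing $B(b,g) = (\mathfrak{F}_g \otimes b)(\varphi)$, with the dependence on the step size $h$ absorbed into the vertex grading of $\mathcal{H}_{BCK}$, and setting $\tilde{f} := \frac{1}{h}B(a,f)$, the whole statement follows once one establishes the identity
\[
	\mathfrak{F}_{\tilde{f}}[\tau] = (a \otimes \mathfrak{F}_f)\big(\Phi(\tau)\big), \qquad \tau \in \mathcal{T}_\Omega,
\]
read order by order in $h$.

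For this identity I would proceed by induction on the number of vertices: writing $\tau = B^i_+(\tau_1\cdots\tau_n)$, I would expand $\Phi(\tau) = \delta^+(\tau)$ via its recursive definition on $B^i_+$ and compare with the recursive rule for $\mathfrak{F}_f[\tau]$, the inductive hypothesis supplying the pieces $\mathfrak{F}_{\tilde f}[\tau_j]$. The crucial bookkeeping is that contracting a full subforest $\overline{F}$ into the tree $\tau/\overline{F}$ destroys exactly $|E(\overline{F})| = |\tau| - |\tau/\overline{F}|$ vertices, while each contracted component, arising from a factor of $\tilde f = \frac1h B(a,f)$, carries precisely the compensating power of $h$; in this way the edge grading of $\mathcal{H}^A_+$ is matched to the vertex grading of $\mathcal{H}^A_{BCK}$. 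Alternatively, and more conceptually, I would invoke the universal property of $\mathcal{P}(\Omega)$: the map $\tau \mapsto (a\otimes\mathfrak{F}_f)(\Phi(\tau)) = \mathfrak{F}_f(\Psi_a(\tau))$ is the composite of the pre-Lie morphism $\mathfrak{F}_f$ with $\Psi_a = (a\otimes\mathrm{id})\circ\Phi$, which by Lemma \ref{sub:action} is a Hopf algebra morphism, hence restricts to a pre-Lie endomorphism of $\mathcal{P}(\Omega)$; since $\mathfrak{F}_{\tilde f}$ is the unique pre-Lie morphism extending $\ab_i \mapsto \tilde f_i$, it then suffices to check agreement on the generators, which is immediate from the definition of $\tilde f$ once the grading is taken into account.

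Granting this identity, the conclusion is a formal computation:
\[
	B\!\left(b, \tfrac{1}{h}B(a,f)\right) = \sum_{\tau\in\mathcal{T}_\Omega} \frac{h^{|\tau|}}{\sigma(\tau)}\, b(\tau)\,\mathfrak{F}_{\tilde f}[\tau] = \sum_{\tau\in\mathcal{T}_\Omega} \frac{h^{|\tau|}}{\sigma(\tau)}\, b(\tau)\,(a\otimes\mathfrak{F}_f)\big(\delta^+(\tau)\big).
\]
Re-indexing the sum by the contracted tree $\sigma = \tau/\overline{F}$, using coassociativity of $\delta^+$ together with the fact that the convolution product $\circledast$ on $\mathcal{H}^*_+$ is by construction dual to $\delta^+$ (via the left action of $\mathcal{H}_+$ on $\mathcal{H}_{BCK}$, as recalled before Lemma \ref{sub:action}) and that the normalisation by symmetry factors built into $\varphi$ and the dual basis $d_F$ is exactly what makes this duality clean, the inner coefficient collapses to $(a\circledast b)(\sigma)$, giving $B(a\circledast b, f)$.

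I expect the main obstacle to be the first displayed identity, i.e.\ the assertion that the extraction--contraction coproduct $\delta^+$ of the substitution bialgebra is precisely what computes the elementary differentials of a $B$-series vector field, together with the careful matching of the three gradings involved (vertices of $\tau$, vertices of $\tau/\overline{F}$, and edges of $\overline{F}$) so that all powers of $h$ and all symmetry factors line up correctly. Once this is in place, the remainder is a formal consequence of the coassociativity of $\delta^+$ and the definition of $\circledast$.
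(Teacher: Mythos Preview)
The paper does not supply its own proof of this lemma; it is quoted from \cite{CEFM11}, and the surrounding discussion (culminating in Lemma~\ref{frizmachine}) merely reinterprets it as the computation of an adjoint. So there is no paper proof to compare against, only that reformulation---and your proposal conflicts with it.

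Your overall strategy, namely to isolate a key identity about elementary differentials and then finish with coassociativity of $\delta^+$, is the right one and is how \cite{CEFM11} proceeds. However, the key identity you write down is backwards. You claim
\[
	\mathfrak{F}_{\tilde f}[\tau] \;=\; (a \otimes \mathfrak{F}_f)\big(\Phi(\tau)\big) \;=\; \mathfrak{F}_f\big(\Psi_a(\tau)\big),
\]
but this fails already for $\tau = \ab$: the left-hand side is $\tilde f = \tfrac{1}{h}B(a,f)$, an infinite series in $h$, whereas $\Psi_a(\ab) = a(\ab)\,\ab$ gives only $a(\ab)f$ on the right. The map $\Psi_a$ \emph{contracts} a tree (producing smaller trees from $\tau$), but substituting a $B$-series vector field into an elementary differential \emph{expands} each node of $\tau$ into larger trees. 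These are adjoint operations, not equal ones. Correspondingly, your final ``re-indexing by $\sigma=\tau/\overline F$'' cannot collapse to $(a\circledast b)(\sigma)$: on your side $b$ is evaluated on the large tree $\tau$ and $a$ on the extracted forest, whereas $(a\circledast b)(\sigma)$ evaluates $a$ on subforests of $\sigma$ and $b$ on the further-contracted tree.

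The correct identity, exactly as the paper writes just before Lemma~\ref{frizmachine}, is $\mathfrak{F}_{\tilde f} = \mathfrak{F}_f \circ g$, where $g$ is the pre-Lie morphism determined by $g(\ab) = \sum_{\tau'} \frac{a(\tau')h^{|\tau'|-1}}{\sigma(\tau')}\tau'$. This is immediate from the universal property of $\mathcal{P}(\Omega)$. The substantive step is then to compute the adjoint $g^* = \Psi_a$, so that the coefficient functional becomes $b\circ\Psi_a = a\circledast b$. Your ``conceptual'' argument also breaks at this point: you invoke Lemma~\ref{sub:action} to conclude that $\Psi_a$, being a Hopf algebra automorphism of $\mathcal{H}_{BCK}$, restricts to a pre-Lie endomorphism of $\mathcal{P}(\Omega)$. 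But the grafting product is not part of the $\mathcal{H}_{BCK}$ Hopf structure, and indeed $\Psi_a(\ab\triangleright\ab) = a(\ab)^2\,\aabb + a(\aabb)\,\ab \neq a(\ab)^2\,\aabb = \Psi_a(\ab)\triangleright\Psi_a(\ab)$ whenever $a(\aabb)\neq 0$. The pre-Lie morphism lives on the dual, Grossman--Larson side; establishing that duality $g^*=\Psi_a$ is precisely the content of the lemma.
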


The above is generalised by first noting that it may be seen as the computation of an adjoint. Indeed, as $\mathfrak{F}_f$ is a pre-Lie algebra morphism, it follows that 
\[
	(\mathfrak{F}_{\tilde{f}} \otimes X_{t,t+h})(\varphi) 
	= (\mathfrak{F}_f\otimes X_{t,t+h})(g \otimes \mathrm{id})(\varphi)
\]
for the unique pre-Lie algebra morphism $g$ described by its action on the single node rooted tree
\[
	g(\ab) = \sum_{\tau'} \frac{a(\tau')  h^{|\tau'|-1}}{\sigma(\tau')}\tau'.
\]

The previous lemma then amounts to the computation of the adjoint of $g$. Bruned et al.~\cite{BCFP17} considered the problem of finding an adjoint in a more general rough path setting, establishing the following result.

\begin{lemma}\label{frizmachine}
Let $a$ be an infinitesimal character on $\mathcal{H}^A_{BCK}$ and define $g$ to be the pre-Lie algebra morphism
\[
	g(\ab_l) = \sum_{[\tau]=\ab_l} \frac{a(\tau)}{\sigma(\tau)} \tau,
\]
extended multiplicatively to forests. Then its adjoint $g^* = \Psi_a = a\circledast\mathrm{id} = (a\otimes\mathrm{id})\circ\Phi$ such that
\[
	\big(\mathfrak{F}_f\otimes X_{st}\big)\big(g\otimes\mathrm{id}\big)(\varphi) 
	= \big(\mathfrak{F}_f\otimes X_{st}\big)\big(\mathrm{id}\otimes \Psi_a\big)(\varphi).
\]
\end{lemma}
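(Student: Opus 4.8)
The plan is to discard the analytic data and reduce the claim to a purely algebraic adjointness statement for the copairing $\varphi$. Since $g$ acts only on the left (Grossman--Larson) leg of $\varphi$ and $\Psi_a$ only on the right (Butcher--Connes--Kreimer) leg, the displayed identity is an immediate consequence of the leg-wise identity
\[
	(g\otimes\id)(\varphi)=(\id\otimes\Psi_a)(\varphi)\qquad\text{in}\quad\mathcal{H}^{A*}_{BCK}\,\bar\otimes\,\mathcal{H}^A_{BCK}.
\]
Expanding $\varphi=\sum_{\tau\in\mathcal{T}_A}\tfrac1{\sigma(\tau)}\,d_\tau\otimes\tau$ and using the normalisation $\langle d_\tau,\mu\rangle=\sigma(\tau)\,\delta_{\tau\mu}$, a comparison of coefficients shows that this is in turn equivalent to the adjointness relation
\[
	\langle g(d_\tau),\mu\rangle=\langle d_\tau,\Psi_a(\mu)\rangle\qquad\text{for all}\quad\tau,\mu\in\mathcal{T}_A ,
\]
the three symmetry factors occurring --- in $\varphi$, in the normalised dual basis, and in the definition of $g(\ab_l)$ --- cancelling exactly so that the statement becomes precisely ``$\Psi_a$ is the adjoint of $g$''. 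Note that $g(d_\tau)$ is a linear combination of trees, because $g$ is a pre-Lie morphism, and $\Psi_a(\mu)=\sum_{\overline F\subseteq\mu}a(\overline F)\,\mu/\overline F$ is likewise tree-valued, so the tree-indexed $\varphi$ is the right object to work with.

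I would first verify the adjointness on the pre-Lie generators $\tau=\ab_l$. On the left, $\langle g(\ab_l),\mu\rangle=\big\langle\sum_{[\tau']=\ab_l}\tfrac{a(\tau')}{\sigma(\tau')}\,d_{\tau'},\mu\big\rangle$ equals $a(\mu)$ when $[\mu]=\ab_l$ and $0$ otherwise. On the right, $\langle d_{\ab_l},\Psi_a(\mu)\rangle=\sum_{\overline F\subseteq\mu}a(\overline F)\,\langle d_{\ab_l},\mu/\overline F\rangle$; the only full subforest $\overline F$ of $\mu$ whose contraction $\mu/\overline F$ is a single vertex is $\overline F=\mu$ itself, and then $\mu/\overline F=\ab_{[\mu]}$, so the right-hand side is again $a(\mu)$ exactly when $[\mu]=\ab_l$. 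Thus both sides agree on all generators, and one reads off simultaneously the identification $g^\ast=\Psi_a=a\circledast\id=(a\otimes\id)\circ\Phi$ (here $a$ is the character of $\mathcal{H}^A_+$ attached to the infinitesimal character $a$ of $\mathcal{H}^A_{BCK}$ in the usual way).

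To propagate from generators to all trees I would use that both maps are Hopf algebra morphisms --- $g$ because a pre-Lie morphism extends to a Grossman--Larson (Guin--Oudom) Hopf algebra morphism, see \textsection\ref{ssect:BCKGL}, and $\Psi_a$ by Lemma \ref{sub:action} --- together with the fact that $\langle-,-\rangle$ is a Hopf pairing between $(\mathcal{H}^{A*}_{BCK},\ast)$ and $(\mathcal{H}^A_{BCK},\Delta)$. Adjointness then transports through the Grossman--Larson product via $\langle g(\xi_1\ast\xi_2),\mu\rangle=\langle g(\xi_1)\otimes g(\xi_2),\Delta(\mu)\rangle$, the morphism property $\Delta\circ\Psi_a=(\Psi_a\otimes\Psi_a)\circ\Delta$, and the generator case applied in each Sweedler slot. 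The one nontrivial ingredient is that the $d_\tau$, $\tau\in\mathcal{T}_A$, are reached from the $d_{\ab_l}$ by the grafting operation with respect to which $\Psi_a$ has to be compatible; this compatibility is exactly the cointeraction of $\mathcal{H}^A_+$ with $\mathcal{H}^A_{BCK}$, namely that the extraction--contraction coproduct $\delta^+$ underlying $\Psi_a=(a\otimes\id)\circ\Phi$ is, on $\mathcal{H}^A_{BCK}$, dual to the grafting/substitution carried out by $g$. This is proved in the undecorated case in \cite{CEFM11} and in the decorated case in \cite{BCFP17}; combined with the generator check and the Hopf-pairing transport, it yields the adjointness and hence the lemma.

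The main obstacle is precisely this last duality, and within it the combinatorial bookkeeping: one has to match the decoration relabelling $\tau_j\mapsto\ab_{[\tau_j]}$ built into $\delta^+$ in \eqref{eq:subCoprod} with the constraint $[\tau']=\ab_l$ appearing in $g(\ab_l)$, while keeping track of the symmetry factors $\sigma(\cdot)$ --- equivalently, of the multiplicities with which a forest of disjoint subtrees sits inside a tree. All of the relevant normalisations have already been fixed in \textsection\ref{sect:TreesWordsHopf}, so this step is a verification along the lines of \cite{CEFM11} rather than a new idea; alternatively, one may regard the whole statement as the decorated, rough-path-valued refinement of the $B$-series substitution formula recalled above, whose proof follows the same pattern.
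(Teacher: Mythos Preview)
The paper does not give a proof of this lemma; it is stated with attribution to \cite{BCFP17} (``Bruned et al.~\cite{BCFP17} considered the problem \ldots\ establishing the following result''), so there is no paper proof to compare against directly. Your proposal is thus an independent reconstruction, and its outline is sound: the reduction to the adjointness relation $\langle g(d_\tau),\mu\rangle=\langle d_\tau,\Psi_a(\mu)\rangle$ via the copairing $\varphi$ is correct (the symmetry factors do cancel as you claim), and your generator computation for $\tau=\ab_l$ is accurate on both sides.

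Where your argument is thinnest is exactly where you say it is. The ``Hopf-pairing transport'' paragraph suggests propagating from $\ab_l$ to all trees via the Grossman--Larson product and the Hopf morphism property of $\Psi_a$, but the primitive elements $d_\tau$ are not reached from the $d_{\ab_l}$ by $\ast$ alone; they are generated by the \emph{pre-Lie} grafting, as you then acknowledge. So the real inductive step is not $\langle g(\xi_1\ast\xi_2),\mu\rangle=\langle g(\xi_1)\otimes g(\xi_2),\Delta\mu\rangle$ but rather the compatibility of $\Psi_a$ with the structure dual to grafting --- equivalently, that the extraction--contraction coaction $\Phi$ is a morphism for the pre-Lie coproduct on $\mathcal{H}^A_{BCK}$. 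You correctly locate this as the cointeraction of \cite{CEFM11,BCFP17}, but you should be aware that invoking it is essentially invoking the content of the lemma: in \cite{BCFP17} the translation/substitution duality \emph{is} the proof, not an auxiliary input. Your write-up would be strengthened by either (i) carrying out the inductive step explicitly --- showing that if adjointness holds for $d_{\tau_1},d_{\tau_2}$ then it holds for $d_{\tau_1\triangleright\tau_2}$, using that full subforests of $\tau_1\triangleright\tau_2$ split as a subforest of $\tau_1$ grafted onto a subforest of $\tau_2$ --- or (ii) stating plainly that the lemma is a reformulation of the cointeraction and citing \cite{BCFP17} for the decorated case, as the paper itself does.
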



\section{Arborified Hoffman isomorphism}
\label{sect:HoffmanIso}

Hoffman \cite{Hoffman00} showed the existence of a unique and explicit Hopf algebra isomorphism $\log_H$ from $\mathcal{H}_{\star}$ to the shuffle Hopf algebra $\mathcal{H}_{\shuffle}$. Stratonovich iterated integrals of a Brownian path may be obtained from It\^{o} iterated integrals by an application of Hoffman's isomorphism \cite{EMPW15a,EMPW15b}. 

\smallskip

Recall the notion of composition of integers and the related concept of contracting words \cite{Hoffman00}. Let $A$ be a locally finite alphabet which is also a commutative semigroup with binary product $[-\ -]$ introduced earlier. A sequence $I:=(i_1,\ldots,i_m) $ of positive integers such that $i_1 + \cdots + i_m = n$, denotes a composition of the integer $n$. The set of those integer compositions is written $\mathcal{C}(n)$. The contraction of a word $w=a_{k_1} \cdots a_{k_n} \in A^*$ is denoted by $I[w]$ and defined in terms of the composition $I=(i_1,\ldots,i_m) \in \mathcal{C}(n)$ 
\[
	I[w]:=[a_{k_1} \cdots a_{k_{i_1}}][a_{k_{i_1+1}} \cdots a_{k_{i_2}}] \cdots 
	[a_{k_{i_m + 1}} \cdots a_{k_{i_n}}] \in A^*.
\]
Here $[i]:=i$. Hoffman's exponential \cite{Hoffman00,HoffmanIhara16} is defined as the map from $\mathcal H_\shuffle$ to $\mathcal H_\star$
\begin{align}
\label{HoffExp}
	\exp_H(u)&:=\sum_{I=(i_1,\ldots ,i_k)\in \mathcal{C}(|u|)} \frac{1}{i_1! \cdots i_k!}I[u],
\end{align}
for $u \in \mathcal H_\shuffle$. The sum ranges over all compositions of the integer $|u|$. Hoffman's logarithm is defined similarly from  $\mathcal H_\star$ to $\mathcal H_\shuffle$ 
\begin{align}
\label{HoffLog}
	\log_H(u)&:=\sum_{I=(i_1,\ldots ,i_k)\in \mathcal{C}(|u|)}\frac{(-1)^{n-k}}{i_1\cdots i_k}I[u].
\end{align}
For example, for $a_{k_1} \in A$ we have $\exp_H a_{k_1} =a_{k_1}$ and $\log_H a_{k_1} =a_{k_1}$. For words of length two and three we obtain
\begin{align*}
	\exp_H (a_{k_1}a_{k_2}) &=a_{k_1}a_{k_2}+\frac{1}{2}[a_{k_1} a_{k_2}],\\
	\log_H (a_{k_1}a_{k_2}) &=a_{k_1}a_{k_2}-\frac{1}{2}[a_{k_1} a_{k_2}],
\end{align*}
respectively
\begin{align*}
	\exp_H (a_{k_1}a_{k_2}a_{k_3}) &= a_{k_1}a_{k_2}a_{k_3}
	+\frac{1}{2!}[a_{k_1} a_{k_2}]a_{k_3}
	+\frac{1}{2!}a_{k_1} [a_{k_2}a_{k_3}]
	+\frac{1}{3!}[a_{k_1} a_{k_2} a_{k_3}],\\
	\log_H (a_{k_1}a_{k_2}a_{k_3}) &=a_{k_1}a_{k_2}a_{k_3}
	-\frac{1}{2}[a_{k_1} a_{k_2}]a_{k_3}
	-\frac{1}{2}a_{k_1} [a_{k_2} a_{k_3}]
         +\frac{1}{3}[a_{k_1} a_{k_2} a_{k_3}].\\
\end{align*}

The Hoffman exponential and logarithm are part of a class of quasi-shuffle algebra automorphisms induced by formal power series \cite{HoffmanIhara16}. See also \cite{NoPaTh13}. Letting $f=\sum_{n>0} f_n t^n$, we define
\[
	\psi_f(w) := \sum_{I=(i_1,\ldots,i_m)\in\mathcal{C}(|w|)} f_{i_1}\cdots f_{i_m} I[w].
\]
Observe that $f=t$ yields $\psi_f(w) = w$ whereas $f= t + \frac{1}{2}t^2$ gives
$$
	\psi_f(w) = \sum_{I=(i_1,\ldots,i_k)\in\mathcal{C}(|w|)} \frac{1}{2^{|I|_2}} I[w],
$$ 
where ${|I|_2}$ is the number of $i_j=2 \in I$.

\begin{lemma} \cite{HoffmanIhara16} Let $f,g$ be formal power series. The composition of their associated quasi-shuffle algebra automorphisms is then computed using the composition $f\circ g$ of formal power series:
\[
	\psi_f\circ\psi_g= \psi_{f\circ g}.
\]
\end{lemma}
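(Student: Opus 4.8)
The plan is to prove the identity $\psi_f \circ \psi_g = \psi_{f \circ g}$ by a direct computation on words, tracking how the iterated contractions compose. The key combinatorial fact is that applying a contraction $J[-]$ to a word $I[w]$, where $I \in \mathcal{C}(|w|)$ and $J \in \mathcal{C}(|I|)$, produces again a contraction of $w$, namely $(J\cdot I)[w]$, where $J \cdot I \in \mathcal{C}(|w|)$ is the coarser composition obtained by grouping the parts of $I$ according to the blocks prescribed by $J$ (that is, if $J = (j_1, \dots, j_r)$ then $J \cdot I = \big(i_1 + \cdots + i_{j_1},\ i_{j_1+1} + \cdots + i_{j_1+j_2},\ \dots\big)$). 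This is a transparent consequence of the associativity and commutativity of the semigroup bracket $[-\ -]$ on $A$, which is exactly what lets us write iterated brackets $[a_{k_1} \cdots a_{k_m}]$ unambiguously.

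First I would set up notation: fix a word $w$ with $|w| = n$. By definition,
\[
	\psi_g(w) = \sum_{I \in \mathcal{C}(n)} g_I \, I[w], \qquad g_I := g_{i_1} \cdots g_{i_m} \ \text{ for } I = (i_1, \dots, i_m).
\]
Applying $\psi_f$ and using linearity, together with the fact that $|I[w]| = |I|$ (the length of $I[w]$ equals the number of parts of $I$), gives
\[
	(\psi_f \circ \psi_g)(w) = \sum_{I \in \mathcal{C}(n)} \ \sum_{J \in \mathcal{C}(|I|)} g_I \, f_J \, J\big[I[w]\big] = \sum_{I \in \mathcal{C}(n)} \ \sum_{J \in \mathcal{C}(|I|)} g_I \, f_J \, (J \cdot I)[w],
\]
where the last equality is the nesting-of-contractions fact above. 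Now I would reorganise the double sum by the coarser composition $K := J \cdot I$: as $(I, J)$ ranges over all pairs with $I \in \mathcal{C}(n)$ and $J \in \mathcal{C}(|I|)$, the composite $K$ ranges over all of $\mathcal{C}(n)$, and for fixed $K = (k_1, \dots, k_r)$ the pairs $(I,J)$ producing it are in bijection with choices, for each $\ell = 1, \dots, r$, of a composition $I^{(\ell)} \in \mathcal{C}(k_\ell)$ — then $I$ is the concatenation of the $I^{(\ell)}$ and $J = (|I^{(1)}|, \dots, |I^{(r)}|)$. Under this reindexing $g_I \, f_J = \prod_{\ell=1}^r \big( g_{I^{(\ell)}} \, f_{|I^{(\ell)}|} \big)$, so
\[
	(\psi_f \circ \psi_g)(w) = \sum_{K = (k_1,\dots,k_r) \in \mathcal{C}(n)} \left( \prod_{\ell=1}^r \ \sum_{I^{(\ell)} \in \mathcal{C}(k_\ell)} f_{|I^{(\ell)}|} \, g_{I^{(\ell)}} \right) K[w].
\]
Finally I would identify the inner sum $\sum_{I^{(\ell)} \in \mathcal{C}(k_\ell)} f_{|I^{(\ell)}|}\, g_{I^{(\ell)}}$ with the $k_\ell$-th Taylor coefficient $(f \circ g)_{k_\ell}$ of the composed power series: indeed $(f \circ g)(t) = \sum_{m \ge 1} f_m\, g(t)^m = \sum_{m\ge1} f_m \sum_{i_1, \dots, i_m \ge 1} g_{i_1}\cdots g_{i_m}\, t^{i_1 + \cdots + i_m}$, and collecting the coefficient of $t^{k}$ groups exactly over compositions of $k$ with the length $m$ recorded, matching $\sum_{I \in \mathcal{C}(k)} f_{|I|}\, g_I$. (Here $g$ has zero constant term, so the composition is well defined and each coefficient is a finite sum.) Substituting back yields $(\psi_f \circ \psi_g)(w) = \sum_{K \in \mathcal{C}(n)} (f\circ g)_K\, K[w] = \psi_{f \circ g}(w)$, as desired.

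The main obstacle — really the only non-bookkeeping point — is establishing cleanly the nesting identity $J[I[w]] = (J \cdot I)[w]$ and the bijection between pairs $(I, J)$ and pairs $(K, \text{refinements})$; both rest on the observation that a composition of $n$ together with a composition of its length is the same data as a composition of $n$ together with a compatible refinement, and that iterated semigroup brackets are insensitive to bracketing. I would state this as a short preliminary lemma (or inline remark) so the reindexing in the main computation is unambiguous. Everything else is formal manipulation of finite sums of power-series coefficients.
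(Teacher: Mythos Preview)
Your argument is correct and is essentially the standard proof given by Hoffman and Ihara in the cited reference: one nests the two contractions via the refinement relation $J[I[w]]=(J\cdot I)[w]$, reindexes the resulting double sum over pairs $(I,J)$ by the coarse composition $K=J\cdot I$ together with a refinement of each block, and then recognises the inner sum as the coefficient $(f\circ g)_{k_\ell}$. The paper itself does not supply a proof of this lemma; it merely quotes the result from \cite{HoffmanIhara16}, so there is nothing further to compare.
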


\noindent If $f$ is invertible we may assume that $f_1=1$, such that $\psi_f \circ\psi_{f^{-1}}= \psi_{t}=\mathrm{id}$. Hoffman's exponential \eqref{HoffExp} and logarithm \eqref{HoffLog} follow from the regular exponential and logarithm. 

\begin{theorem}\cite{Hoffman00}
Hoffman's exponential, $\psi_{\exp(t)-1}$, is a Hopf algebra isomorphism $\mathcal{H}_{\shuffle} \rightarrow \mathcal{H}_*$, with inverse $\psi_{\log(1+t)}: \mathcal H_\star \to \mathcal H_\shuffle$.
\end{theorem}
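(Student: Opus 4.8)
The plan is to split the statement into four parts: identify $\psi_{\exp(t)-1}$ and $\psi_{\log(1+t)}$ with the maps $\exp_H$ of \eqref{HoffExp} and $\log_H$ of \eqref{HoffLog}; deduce bijectivity and the inverse from the Hoffman--Ihara composition lemma; verify compatibility with the deconcatenation coproduct shared by $\mathcal{H}_\shuffle$ and $\mathcal{H}_\star$; and, the substantial point, show that $\psi_{\exp(t)-1}$ sends the shuffle product to the quasi-shuffle product. The first part is immediate from the definitions: $\exp(t)-1=\sum_{n\ge1}t^n/n!$ has coefficients $f_n=1/n!$, so $\psi_{\exp(t)-1}(w)=\sum_{I=(i_1,\dots,i_m)\in\mathcal{C}(|w|)}\tfrac{1}{i_1!\cdots i_m!}I[w]=\exp_H(w)$, while $\log(1+t)=\sum_{n\ge1}(-1)^{n-1}t^n/n$ produces the coefficient $\prod_j(-1)^{i_j-1}/i_j=(-1)^{|w|-m}/(i_1\cdots i_m)$ (using $\sum_j(i_j-1)=|w|-m$), which is exactly $\log_H$. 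For the second part, $\exp(t)-1$ and $\log(1+t)$ both lie in $t+t^2k[[t]]$ and are compositional inverses, $(\exp(t)-1)\circ\log(1+t)=t=\log(1+t)\circ(\exp(t)-1)$, whereas $\psi_t=\mathrm{id}$ since for $f=t$ only the composition $(1,\dots,1)$ survives the defining sum and $(1,\dots,1)[w]=w$; the composition lemma then gives $\psi_{\exp(t)-1}\circ\psi_{\log(1+t)}=\psi_t=\mathrm{id}$ and likewise in the other order, so the two maps are mutually inverse bijections.

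For the third part I would check, for \emph{every} $f\in tk[[t]]$, that $\psi_f$ commutes with the deconcatenation coproduct $\triangle$. Given a word $w$ of length $n$ and $I=(i_1,\dots,i_m)\in\mathcal{C}(n)$, deconcatenating $I[w]$ just after its $s$-th letter yields $I'[w']\otimes I''[w'']$, where $w=w'w''$ is split after position $p=i_1+\cdots+i_s$ and $I'=(i_1,\dots,i_s)$, $I''=(i_{s+1},\dots,i_m)$ are the induced compositions of $p$ and $n-p$; the scalar factorises, $f_{i_1}\cdots f_{i_m}=(f_{i_1}\cdots f_{i_s})(f_{i_{s+1}}\cdots f_{i_m})$. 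Summing over $I$ and the cut point is thus the same as summing over $0\le p\le n$ and over independent compositions of $p$ and of $n-p$, giving $\triangle\circ\psi_f=(\psi_f\otimes\psi_f)\circ\triangle$; together with $\psi_f(\mathbf{1})=\mathbf{1}$ and compatibility with the counit (both immediate), this makes $\psi_{\exp(t)-1}$ a coalgebra isomorphism.

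The fourth part, that $\exp_H\colon(k\langle A\rangle,\shuffle)\to(k\langle A\rangle,\star)$ is an algebra morphism, is Hoffman's theorem and the real work; I would argue by induction on $|u|+|v|$, the case of $u$ or $v$ empty being trivial. For the inductive step, write $u=ax$, $v=by$ with $a,b\in A$; on the left use $u\shuffle v=a(x\shuffle by)+b(ax\shuffle y)$ together with the ``first block'' recursion $\exp_H(az)=\sum_{j\ge0}\tfrac{1}{(j+1)!}[a z_1\cdots z_j]\,\exp_H(z_{j+1}\cdots z_{|z|})$; on the right expand $\exp_H(ax)$ and $\exp_H(by)$ by the same recursion and apply the quasi-shuffle recursion $c w'\star d w''=c(w'\star d w'')+d(c w'\star w'')+[cd](w'\star w'')$ with $c=[a x_1\cdots x_j]$ and $d=[b y_1\cdots y_l]$. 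After invoking the induction hypothesis on the strictly shorter pairs that occur, both sides become sums indexed by the ways of cutting the letters of $u$ into blocks (one containing $a$) and those of $v$ into blocks (one containing $b$), and the required equality reduces to a bookkeeping identity on compositions and the weights $1/(j+1)!$. The delicate point --- and the main obstacle of the whole argument --- is reconciling the three terms of the quasi-shuffle recursion with the contribution of a single block straddling $a$ and $b$, i.e.\ the multinomial identities governing how $[\,\cdot\,]$-contractions distribute over shuffles; one may instead follow Hoffman's generating-function computation. Granting this, $\psi_{\exp(t)-1}$ is a bijective morphism of both algebras and coalgebras, hence a Hopf algebra isomorphism $\mathcal{H}_\shuffle\to\mathcal{H}_\star$ whose inverse, by the second part, is $\psi_{\log(1+t)}$.
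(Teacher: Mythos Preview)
The paper does not prove this theorem at all: it is stated with the citation \cite{Hoffman00} and no proof is given, as the result is quoted from Hoffman's original paper. There is therefore nothing in the present paper to compare your proposal against.

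That said, your outline is essentially Hoffman's own argument from \cite{Hoffman00}. Parts one through three are correct and routine; your observation that the deconcatenation coproduct is preserved by every $\psi_f$ (not just $\psi_{\exp(t)-1}$) is exactly what Hoffman shows. For part four you correctly identify the substantive step and honestly flag its difficulty; Hoffman's actual proof of the algebra-morphism property proceeds by a direct bijective/counting argument rather than the inductive recursion you sketch, but both routes work. Your proposal is a sound strategy for reproducing the cited result.
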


Recall that Lemma~\ref{sub:action} states that the map $\Psi_v=(v\otimes\mathrm{id})\circ\Phi$ associated to a character $v$ of $\mathcal{H}^A_+$ is a Hopf algebra automorphism of $\mathcal{H}^A_{BCK}$. Moreover, suppose that $v(\tau)$ is independent of the decorations of $\tau$, and let $v(\ell_n)=f_n$, where $\ell_n$ is a ladder tree with $n$ nodes and the $f_n$ are coefficients of a formal power series $f$. Proposition~\ref{prop:laddertrees} then shows that when restricted to ladder trees, we have $\Psi_v = \psi_f$. The following result can then be considered an extension of Hoffman's theorem from quasi-shuffle automorphisms induced by formal power series to Butcher--Connes--Kreimer automorphisms induced by the substitution bialgebra:

\begin{lemma}\label{lem:Hoff1} Let $u,v$ be linear maps from $\mathcal{H}^A_{+}$ to $k$. Then 
$\Psi_u \circ \Psi_v = \Psi_{v \circledast u}$.
\end{lemma}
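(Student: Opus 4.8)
The plan is to read off the identity directly from the comodule axiom for $\Phi$, using that the convolution product $\circledast$ on $\mathcal{H}^{A*}_{+}$ is the one dual to the coproduct $\delta^+$ of \eqref{eq:subCoprod}, i.e.\ $(v \circledast u)(y) = (v \otimes u)\big(\delta^+(y)\big)$ for $y \in \mathcal{H}^A_+$. Recall from the discussion of the bicomodule structure $\Phi$ preceding Lemma~\ref{sub:action} -- and from \cite{CEFM11}, where it is proved in the undecorated case and then transported to $\mathcal{H}^A_+$ -- that $\Phi \colon \mathcal{H}^A_{BCK} \to \mathcal{H}^A_+ \otimes \mathcal{H}^A_{BCK}$ is a left $\mathcal{H}^A_+$-comodule structure; in particular it is coassociative,
\[
	\big(\delta^+ \otimes \mathrm{id}\big) \circ \Phi = \big(\mathrm{id} \otimes \Phi\big) \circ \Phi .
\]
I will write $\Phi(x) = \sum x^{(-1)} \otimes x^{(0)}$ with $x^{(-1)} \in \mathcal{H}^A_+$ and $x^{(0)} \in \mathcal{H}^A_{BCK}$, and $\delta^+(y) = \sum y_{(1)} \otimes y_{(2)}$ in Sweedler notation.

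First I would unfold the left-hand side. Since $\Psi_w = (w \otimes \mathrm{id}) \circ \Phi$, one has $\Psi_v(x) = \sum v(x^{(-1)})\, x^{(0)}$, and applying $\Psi_u$ and expanding $\Phi$ on $x^{(0)}$ gives
\[
	(\Psi_u \circ \Psi_v)(x) = \sum v(x^{(-1)})\, u\big((x^{(0)})^{(-1)}\big)\, (x^{(0)})^{(0)} = \big(v \otimes u \otimes \mathrm{id}\big)\Big(\big(\mathrm{id} \otimes \Phi\big) \circ \Phi(x)\Big).
\]
Next I would invoke coassociativity to replace $(\mathrm{id} \otimes \Phi) \circ \Phi$ by $(\delta^+ \otimes \mathrm{id}) \circ \Phi$, turning the right-hand side into
\[
	\sum v\big((x^{(-1)})_{(1)}\big)\, u\big((x^{(-1)})_{(2)}\big)\, x^{(0)} = \sum (v \circledast u)(x^{(-1)})\, x^{(0)} = \Psi_{v \circledast u}(x),
\]
which is the claimed identity. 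The case $x = \mathbf{1}$ is covered by the same computation, using $\Phi(\mathbf{1}) = \ab \otimes \mathbf{1}$ together with the fact that $\ab$ is group-like for $\delta^+$.

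I do not expect a genuine obstacle here: the single structural input is coassociativity of the coaction $\Phi$, which is part of the bicomodule property recalled above, and the rest is bookkeeping with Sweedler notation. In the write-up I would stress two points. First, no hypothesis on $u$ and $v$ beyond $k$-linearity is used; the character assumption in Lemma~\ref{sub:action} is what additionally forces $\Psi_v$ to be a Hopf algebra automorphism rather than merely an algebra endomorphism. Second, the order of the factors is $v \circledast u$, not $u \circledast v$, and this is dictated by which tensor leg $\delta^+$ occupies in the coassociativity identity; it is consistent with $Z_{\ab}$ being the unit of $\circledast$ (so that $\Psi_{Z_{\ab}} = \mathrm{id}$), with the restriction to ladder trees where $\Psi_u \circ \Psi_v$ becomes $\psi_{f}\circ\psi_{g}$ and the Hoffman--Ihara composition law applies, and with Lemma~\ref{frizmachine} combined with the $B$-series substitution formula $B\big(b,\tfrac{1}{h}B(a,f)\big) = B(a \circledast b, f)$.
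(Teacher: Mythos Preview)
Your argument is correct and is essentially identical to the paper's proof: both compute $\Psi_u \circ \Psi_v = (v \otimes u \otimes \mathrm{id})(\mathrm{id}\otimes \Phi)\Phi$, invoke the comodule coassociativity $(\mathrm{id}\otimes \Phi)\Phi = (\delta^+ \otimes \mathrm{id})\Phi$, and read off $\Psi_{v \circledast u}$. Your additional remarks on the ordering of $v$ and $u$ and the consistency checks are helpful commentary but not part of the paper's proof.
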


\begin{proof}
We calculate 
\begin{align*}
	\Psi_u \circ \Psi_v 
	&=(v \otimes u \otimes \mathrm{id})(\mathrm{id}\otimes \Phi)\Phi\\
	&=(v \otimes u \otimes \mathrm{id})(\delta^+  \otimes \mathrm{id})\Phi
	= \Psi_{v \circledast u} .		 
\end{align*}
\end{proof}

Recall that $\mathcal{H}^A_+$ is a graded but not connected bialgebra. Let $v$ be a multiplicative map from $\mathcal{H}^A_+$ to $k$ that maps $\ab_i$ to one, i.e., $v(\ab_i)=1$ for all $i \in A$. It turns out that in this case we can invert those characters by composition with the pseudo-antipode map $\alpha:=\sum_{i \ge 0} (\mathrm{id}-\eta\varepsilon)^{\circledast i}$, where $\eta$ and $\varepsilon$ are the unit respectively counit of $\mathcal{H}^A_+$. One can then show that $v \circledast (v \circ \alpha) = (v \circ \alpha) \circledast v = \varepsilon$, such that  $\Psi_{v \circ \alpha} \circ \Psi_v = \Psi_{\varepsilon}=\mathrm{id}$.

\medskip

Recall that a commutative and associative product on a decoration set $A$ gives rise not only to a quasi-shuffle algebra, but also to a contracting arborification, that is, a surjective algebra morphism $\mathfrak{a}^c: \mathcal{H}^A_{BCK} \rightarrow \mathcal{H}_\star$. The non-contracting arborification $\mathfrak{a}:\mathcal{H}_{CK} \rightarrow \mathcal{H}_{\shuffle}$ results from taking the product on $A$ to be trivial, so that the quasi-shuffle product becomes a shuffle product. 

Our central result is the description of the arborified Hoffman isomorphism, i.e., a canonical Butcher--Connes--Kreimer automorphism $\Psi_v$ making the following diagram commute:
\begin{equation}
\label{cdiag}
\begin{diagram}
	\mathcal{H}^A_{BCK}   	&& \rTo{\Psi_v} 		&& \mathcal{H}^A_{BCK} \\
	\dTo{\mathfrak{a}} 		&& 			 	&& \dTo{\mathfrak{a}^c} \\
	\mathcal{H}_{\shuffle}  	&& \rTo{\exp_H}  	&& \mathcal{H}_\star  
\end{diagram}
\end{equation}
\begin{theorem}\label{main:theorem}
Let $\tau \in \mathcal{T}_A$ be an $A$-decorated rooted tree. Define $v(\tau):=\frac{1}{\tau!}$ be the inverse tree factorial, which is multiplicative and $v(\ab_i)=1$ for all $i \in A$. Then $\Psi_v$ is the arborified Hoffman exponential, i.e., it is the unique Butcher--Connes--Kreimer Hopf algebra automorphism satisfying the following:
\begin{enumerate}
 	\item $\Psi_v$ makes the diagram \eqref{cdiag} commute.
 	\item The adjoint of $\Psi_v$ is a pre-Lie morphism.
\end{enumerate}
In fact, the adjoint of the arborified Hoffman exponential is the pre-Lie morphism defined on single vertex trees by
\begin{equation}
\label{eq:dualArboHoffm}
	\Psi^*_v(\ab_i) = \sum_{[\tau]=\ab_i} \frac{\mathrm{cm}(\tau)}{|\tau|!} \tau.
\end{equation}
\end{theorem}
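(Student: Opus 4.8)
The plan is to establish, in order, the explicit formula for $\Psi^*_v$ together with property~(2), then property~(1), and finally uniqueness, treating the adjoint as the primary object. Throughout, $\Psi_v$ is a Hopf algebra automorphism of $\mathcal{H}^A_{BCK}$ by Lemma~\ref{sub:action}, and the adjoint is understood in the sense of the formal flow $\varphi$.

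\emph{Step 1 (the adjoint and property (2)).}
First I would compute $\Psi^*_v$ directly from Lemma~\ref{frizmachine}. Expanding the Connes--Moscovici coefficient via \eqref{CM} gives $\frac{\mathrm{cm}(\tau)}{|\tau|!}=\frac{1}{\tau!\,\sigma(\tau)}=\frac{v(\tau)}{\sigma(\tau)}$, so the pre-Lie morphism $g$ with $g(\ab_i):=\sum_{[\tau]=\ab_i}\frac{\mathrm{cm}(\tau)}{|\tau|!}\tau$ is exactly the pre-Lie morphism that Lemma~\ref{frizmachine} associates to the character $v$ of $\mathcal{H}^A_+$ (equivalently, to the infinitesimal character $\tau\mapsto 1/\tau!$). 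Hence $g^*=\Psi_v$, and since the passage to the adjoint is involutive, $\Psi^*_v=g$; in particular $\Psi^*_v$ is a pre-Lie morphism and is given by \eqref{eq:dualArboHoffm}.

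\emph{Step 2 (commutativity of \eqref{cdiag}).}
The key observation is that $\mathfrak{a}^c\circ\Psi_v$ and $\exp_H\circ\mathfrak{a}$ are both morphisms of (conilpotent) coalgebras from $\mathcal{H}^A_{BCK}$ into $\mathcal{H}_\star$, whose underlying coalgebra (deconcatenation) is the cofree conilpotent coalgebra on $\mathrm{span}(A)$; such a morphism is uniquely determined by its composition with the projection $\pi\colon\mathcal{H}_\star\to\mathrm{span}(A)$ onto words of length one. So it suffices to check $\pi\circ\mathfrak{a}^c\circ\Psi_v=\pi\circ\exp_H\circ\mathfrak{a}$ on an arbitrary forest $F=\tau_1\cdots\tau_n$. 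For the left side one counts contractions in the quasi-shuffle product: the length-one component of a $\star$-product of words vanishes unless every factor is a single letter, so $\pi\circ\mathfrak{a}^c$ annihilates every forest containing a tree with at least two vertices and sends $\ab_{k_1}\cdots\ab_{k_n}$ to $[k_1\cdots k_n]$. Since the only term of $\Psi_v(F)=(v\otimes\mathrm{id})\delta^+(F)$ supported on single-vertex forests is $\tfrac{1}{F!}\,\ab_{[\tau_1]}\cdots\ab_{[\tau_n]}$ (a contracted component $\tau_j/\overline{F_j}$ has a single vertex only for $\overline{F_j}=\tau_j$, contributing $v(\tau_j)=1/\tau_j!$), we get $\pi\circ\mathfrak{a}^c\circ\Psi_v(F)=\tfrac{1}{F!}[F]$, where $[F]\in A$ denotes the contraction of all decorations of $F$. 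For the right side, $\mathfrak{a}(F)$ is a combination of words of length $|F|$, each carrying the decorations of $F$, with coefficient-sum $|F|!/F!$ --- an induction on $\mathfrak{a}\circ B^i_+=R^i\circ\mathfrak{a}$ using $\tau!=|\tau|\prod_j\tau_j!$ --- while $\pi\circ\exp_H$ sends a word $w$ to $\tfrac{1}{|w|!}[w]$; hence $\pi\circ\exp_H\circ\mathfrak{a}(F)=\tfrac{|F|!}{F!}\cdot\tfrac{1}{|F|!}[F]=\tfrac{1}{F!}[F]$, which matches.

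\emph{Step 3 (uniqueness).}
Let $\Psi$ be a Butcher--Connes--Kreimer Hopf algebra automorphism satisfying (1) and (2). By (2) its adjoint $g:=\Psi^*$ is a pre-Lie morphism, hence determined by the elements $g(\ab_i)$, and $\Psi=g^*$. From the definition of the adjoint through $\varphi$, the coefficient of $\ab_k$ in $\Psi(\tau)$ equals $\sigma(\tau)$ times the coefficient of $\tau$ in $g(\ab_k)$; applying $\pi\circ\mathfrak{a}^c$ to $\Psi(\tau)$ and using (1) on the single tree $\tau$ forces the coefficient of $\tau$ in $g(\ab_k)$ to equal $\tfrac{\mathrm{cm}(\tau)}{|\tau|!}$ when $[\tau]=\ab_k$ and $0$ otherwise. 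Thus $g=\Psi^*_v$ and $\Psi=\Psi_v$. The main obstacle is Step~2, and within it the recognition that projecting onto length-one words is the right linear invariant: a direct check of \eqref{cdiag} on trees would need an explicit formula for $\mathfrak{a}^c(\Psi_v(\tau))$, which is unwieldy, whereas $\pi$ kills all the complicated terms and, by cofreeness, still detects equality; the two supporting combinatorial facts (vanishing of the length-one part of a quasi-shuffle product of non-letters, and the count $|F|!/F!$ for $\mathfrak{a}(F)$) carry the technical weight.
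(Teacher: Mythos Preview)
Your proof is correct, and Step~2 takes a genuinely different route from the paper's. The paper establishes $\mathfrak{a}^c\circ\Psi_v=\exp_H\circ\,\mathfrak{a}$ by a direct combinatorial count: it expands both sides as sums of composed words $I[i_1\cdots i_n]$ with coefficients, tracks how contractions from $\Psi_v$ and from $\mathfrak{a}^c$ combine into a single composition, and matches the two sets of coefficients via the Knuth formula $|\tau|!/\tau!$ for the number of linear extensions. Your argument instead exploits that the target coalgebra $(k\langle A\rangle,\triangle)$ is cofree conilpotent on $\mathrm{span}(A)$, so that two coalgebra morphisms from $\mathcal{H}^A_{BCK}$ into it coincide once their length-one projections agree; this reduces the whole verification to the scalar identity $\tfrac{1}{F!}=\tfrac{|F|!}{F!}\cdot\tfrac{1}{|F|!}$. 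The linear-extension count still appears (as the coefficient-sum of $\mathfrak{a}(F)$), but the bookkeeping of compositions, which occupies most of the paper's argument, is bypassed entirely. Your Step~1 coincides with the paper's use of Lemma~\ref{frizmachine} and \eqref{CM}. Your Step~3 is more complete than the paper's one-line remark that a pre-Lie morphism is determined on generators; one small point worth making explicit is that condition~(2), via the very definition of the adjoint through $\varphi$ (which is supported on trees), forces $\Psi(\tau)$ to be a combination of \emph{trees}, so that $\pi\circ\mathfrak{a}^c(\Psi(\tau))$ indeed reads off exactly the $\ab_k$-coefficients and no spurious single-vertex-forest contributions appear.
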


Before we prove this theorem, we show how the arborified Hoffman exponential generalises the classical Hoffman exponential. Indeed, consider the particular case of the Hopf subalgebra of ladder tree $\ell_{i_1 \cdots i_l}$ in $\mathcal{H}^A_{BCK} $. Recall Proposition \ref{prop:laddertrees} and the fact that $\ell_{i_1 \cdots i_l}!=l!$. Then we find that 
\begin{align*}
	\Psi_v(\ell_{i_1 \cdots i_l}) 
	&= (v \otimes \mathrm{id})\Phi(\ell_{i_1 \cdots i_l})\\
	&= \sum_{I_1, \ldots, I_n \in A^* \atop I_1 \cdots I_n=i_1 \cdots i_l} 
	v(\ell_{I_1}) \cdots v(\ell_{I_n}) \ell_{[I_1] \cdots [I_n]}\\
	&= \sum_{I_1, \ldots, I_n \in A^* \atop I_1 \cdots I_n=i_1 \cdots i_l} \frac{1}{\ell_{I_1}! \cdots \ell_{I_n}!} \ell_{[I_1] \cdots [I_n]},
\end{align*}
in accordance with the remarks preceding Lemma~\ref{lem:Hoff1} and the fact that $I_1 \cdots I_n=i_1 \cdots i_l$ is equivalent to the application of the composition $I=(i_1,\ldots,i_n)$ to the word $w=i_1 \cdots i_l$, where $i_m$ is the number of letters, that is, the length of the interval $I_m$, for $ m=1, \ldots, n$.

Let us look at the tree ${}_{\phantom{i_1}}^{i_2}\hspace{-.08cm} \scalebox{0.5}{\aababb}^{\!i_3}_{\!\!i_1}$ as another example.   
$$
	\Psi_v( \hspace{-.3cm}\begin{array}{c}{\phantom{\aabb}}^{i_1}\aababb^{i_2}_{\!\!\!i_3}\end{array}\hspace{-.2cm})
	= \frac{1}{3}\ab_{[i_1 i_2i_3]} 
	+ \hspace{-.3cm}\begin{array}{c}{\phantom{\aabb}}^{i_1}\aababb^{i_2}_{\!\!\!i_3}\end{array}\hspace{-.2cm}
	+ \frac{1}{2} \hspace{-.2cm} \begin{array}{c}\aabb^{i_2}_{[i_1i_3]}\end{array}\hspace{-.2cm} 
	+ \frac{1}{2}  \hspace{-.2cm} \begin{array}{c}\aabb^{i_1}_{[i_2i_3]}\end{array}\hspace{-.1cm} .
$$
Now, we compare $\mathfrak{a}^c(\Psi_v({}_{\phantom{i_1}}^{i_2}\hspace{-.08cm} \scalebox{0.5}{\aababb}^{\!i_3}_{\!\!i_1}))$ with $\exp_H(\mathfrak{a}({}_{\phantom{i_1}}^{i_2}\hspace{-.08cm} \scalebox{0.5}{\aababb}^{\!i_3}_{\!\!i_1}))$, which should be equal if the diagram commutes. 
$$   
	\mathfrak{a}^c(\Psi_v(\hspace{-.3cm}\begin{array}{c}{\phantom{\aabb}}^{i_1}\aababb^{i_2}_{\!\!\!i_3}\end{array}\hspace{-.2cm}))=
	\frac{1}{3}{[i_1 i_2i_3]} 
	+ i_1i_2i_3 + i_2i_1i_3 + [i_1i_2]i_3 
	+ \frac{1}{2} i_2[i_1i_3] 
	+ \frac{1}{2} i_1[i_2i_3] . 
$$
Arborification gives $\exp_H(\mathfrak{a}({}_{\phantom{i_1}}^{i_2}\hspace{-.08cm} \scalebox{0.5}{\aababb}^{\!i_3}_{\!\!i_1}))=\exp_H( i_1i_2i_3 + i_2i_1i_3)$, which yields
\begin{align*}
	\exp_H(\mathfrak{a}(\hspace{-.3cm}\begin{array}{c}{\phantom{\aabb}}^{i_1}\aababb^{i_2}_{\!\!\!i_3}\end{array}\hspace{-.2cm}))
	&= i_1i_2i_3 
	+ \frac{1}{2} [i_1i_2]i_3 
	+ \frac{1}{2} i_1[i_2i_3] 
	+ \frac{1}{3!} [i_1i_2i_3] \\
	&
	+ i_2i_1i_3 
	+ \frac{1}{2} [i_2i_1]i_3 
	+ \frac{1}{2} i_2[i_1i_3] 
	+ \frac{1}{3!} [i_2i_1i_3] \\
	&= \frac{1}{3}{[i_1 i_2i_3]} 
	+ i_1i_2i_3 + i_2i_1i_3 + [i_1i_2]i_3 
	+ \frac{1}{2} i_1[i_2i_3] 
	+ \frac{1}{2} i_2[i_1i_3]. 
\end{align*}
In the last equality we used associativity and commutativity of the semigroup $A$. 

\begin{proof}
To prove Theorem \ref{main:theorem}, we first note that Lemma~\ref{sub:action} guarantees that $\Psi_v$ is a Butcher--Connes--Kreimer automorphism. 
As the Hoffman exponential is an algebra morphism, to prove commutativity of the diagram \eqref{cdiag} it suffices to show that
\begin{equation}
\label{eq:ArboHoff}
	\mathfrak{a}^c\circ \Psi_v = \exp_H \circ\, \mathfrak{a}.
\end{equation}
Recall the definitions of (contracting) arborification in \eqref{arborifc} respectively  \eqref{arborif}. Note that contractions on the right-hand side of \eqref{eq:ArboHoff} solely stem from Hoffman's exponential $\exp_H$, whereas on the left-hand side they come from both the contracting arborification as well as $\Psi_v$. Following Hoffman, we proceed by counting. In particular, both the left-hand side and the right-hand side are sums over the same words, with coefficients. It remains to match these coefficients. To see this, assume that $\tau \in \mathcal{T}_A$ has $n$ vertices decorated by ${i_1},\ldots, {i_n} \in A$, and note that from the definition of the map $\mathfrak{a}$ in \eqref{arborif} we find that 
\[
	\mathfrak{a}(\tau) = \sum_{\mathcal{N}(\tau)} {i_1} \cdots {i_n},
\]
where the sum is over all ways of ordering the decorations of the nodes of the tree into a word in such a way that it respects the partial order of the tree. The definition of $\mathfrak{a}$ implies that rightmost letter of ${i_1} \cdots {i_n}$ is the decoration of the root of $\tau$. Similarly, we have
\[
	\mathfrak{a}^c(\tau) = \sum_{I,\mathcal{M}(\tau)} I[{i_1}\cdots {i_b}],
\]
where the sum is over all words ordered as above, and over all compositions of these words such that only letters which are incomparable with respect to the partial order of $\tau$ may be contracted. Note that we count only once each contracted block $[i \cdots j]$, and not all the different ways this may arise from different orderings. Now the right-hand side of the identity may be written
\[
	\exp_H(\mathfrak{a}(\tau)) = \sum_{K,\mathcal{N}(\tau)} c_K K[{i_1} \cdots {i_n}]
\]
where $c_K$ are the Hoffman exponential coefficients, i.e., the usual factorials $\frac{1}{|w|!}$ for each composed word $[w]$ with $|w|$ letters. We now consider the left-hand side. Note that following arborification, each contraction from $\Psi_v$ acting on $\tau$ will correspond to a composition $J$ of the letters marking the contracted vertices. It follows that
\[
	\mathfrak{a}^c(\Psi_v(\tau)) = \sum_{I,J,\mathcal{M}(\tau)} c_{I,J} I\circ J[{i_1} \cdots {i_b}],
\]
for some coefficient $c_{I,J}$. Suppose that a subword $w= i \ldots j$ is composed in the above sum. The partial ordering on the vertices gives rise to a forest $\tau_1\cdots \tau_m$; this composition could only arise by having $J$ contract each of the trees $\tau_i$, and then letting $I$ compose the remaining (incomparable) vertices. It is clear then that the above sum may be rewritten
\[
	\mathfrak{a}^c(\Psi_v(\tau)) = \sum_{L,\mathcal{M}(\tau)} c_L L[{i_1}\cdots {i_b}],
\]
where the sum is over all compositions $L$, and the coefficient $c_L$ for each composed word $w$ with partial order $\tau_1\cdots\tau_m$ is $\frac{1}{\tau_1!\cdots\tau_m!}$. To relate the two, it remains to compare the sums over $\mathcal{M}$ and $\mathcal{N}$, i.e., to account for the fact that the right-hand side of the identity to be proven allows multiple ways of obtaining the same composed word, that is, a letter $[w]$. For this purpose, we note that given such a $[w]$, the number of times it appears in the sum from different orderings is equal to the number of linear extensions of the underlying partial order, i.e., number of ways of extending to a total order. Letting the forest of the order be $\tau_1\cdots\tau_m$, and the total number of letters of $w$ be $n$, this is exactly (see \cite{Knuth})
\[
	\frac{(n+1)!}{[\tau_1\cdots\tau_m]!}=\frac{(n+1)!}{(n+1)\tau_1!\cdots\tau_m!} 
	= \frac{n!}{\tau_1!\cdots\tau_m!},
\]
where the first equality comes from the recursive definition of the tree factorial. As multiplying $c_K$ (the Hoffman coefficients) by the right hand side above gives $c_L$, we are done.

The form of the adjoint follows from Lemma~\ref{frizmachine}, as 
\[
	\frac{1}{\sigma(\tau)\tau!} = \frac{\mathrm{cm}(\tau)}{|\tau|!}.
\]
The proof is then concluded by observing that any morphism of the free pre-Lie algebra $\mathcal{P}(A)$ is determined by the values it takes on the set of decorations $A$. 
\end{proof}


\section{Marcus canonical extension}
\label{sect:Marcus}

A wide variety of physical phenomena are well approximated by stochastic differential equations (SDEs) of the form
\begin{equation}
\label{SDE}
	X_t = X_0 + \int_0^t a(X_s)ds + \sum_{j=1}^N \int_0^t b_j(X_{s-})dZ^j_s,
\end{equation}
where $\{Z^1_t,\ldots,Z^N_t\}$ are $N$ real-valued semimartingales \cite{Protter05} and $a,b_1, \ldots, b_N$ are vector fields in $\mathbb{R}^d$. To ungarble notation, we suppose in the following that $N=1$, i.e., we consider the simple case of a single real-valued semimartingale. Often processes such as white noise, with its associated Wiener process differential $dW_t$, are approximations or limits of a more regular noise $dW^{\epsilon}_t$. Suppose that the equation describing the physical system is then an ordinary differential equation (ODE) of the form 
\begin{equation}
\label{regODE}
	dX^{\epsilon}_t = a(X^{\epsilon}_t)dt + b(X^{\epsilon}_t)dW^{\epsilon}_t.
\end{equation}   
In general, the solution $X^{\epsilon}_t$ does not converge to $X_t$ as the regular noise $dW^{\epsilon}_t$ approaches $dW_t$. In the case of SDEs driven by Wiener processes, this problem is usually resolved with the help of Stratonovich integration: the regular ODE \eqref{regODE} converges to the solution of the associated Stratonovich SDE
\[
	X_t = X_0 + \int_0^t a(X_s)ds + \int_0^t b(X_s)\!\circ\! dW_s.
\]
Indeed, the regularised equation does converge to an It\^{o} SDE, but with modified vector fields: $X^{\epsilon}_t \rightarrow X_t$, where
\begin{equation}
\label{contnoise}
	X_t = X_0 + \int_0^t \Big(a(X_s) 
			+\frac{1}{2}\sum_{i=1}^d b^i(X_s)\frac{\partial b(X_s)}{\partial x^i}\Big)ds 
			+ \int_0^t b(X_s)dW_s.
\end{equation}
McShane \cite{McShane75} called this equation the canonical extension. Observe that the term $\sum_{i=1}^db^i(X_s)\frac{\partial b}{\partial x^i}(X_s)$ can be interpreted as the pre-Lie product  $(b \triangleright b)(X_s)$ of the vector field $b$ with itself defined in \eqref{preLieVect1}. Indeed, it turns out that Marcus' main result \cite{Marcus78} is that for SDEs driven by general semimartingales, the regularised solutions $X^{\epsilon}_t$ tend to the solution of the following canonical extension:
\begin{equation}\label{Marcus}
	X_t = X_0 + \int_0^t a(X_s)ds 
	+ \sum_{n=1}^{\infty} \frac{1}{n!}\int_0^t 
	\big( b\triangleright(\cdots b \triangleright(b\triangleright b)) \big)(X_s)d[Z]^{(n)}_s,
\end{equation}
where $[Z]^{(1)}:=Z$, and $[Z]^{(n)}:=[Z,[Z]^{(n-1)}]$, $n>1$, is the $n$-fold quadratic variation  \cite{Protter05}. The vector fields $L^{n-1}_{b \triangleright}(b) = b\triangleright(\cdots b \triangleright (b\triangleright b))$ are the $n$-fold pre-Lie products of the vector field $b$ with itself. Observe that \eqref{contnoise} corresponds to $d[W]^{(1)}:=dW$, $d[W]^{(2)}=dt$, and $d[W]^{(n)}=0$ for $n>2$.

Marcus showed explicitly that in the particular case where $Z_t$ is a Poisson process $N_t$, as $[N]_t^{(n)} = N_t$ the canonical extension reduces to
\[
	X_t = X_0 + \int_0^t a(X_s)ds + \int_0^t \big( e^{L_{b\triangleright}}\circ \mathrm{id} 
	- \mathrm{id}\big)(X_s)dN_s.
\]
This has a rather intriguing analytic interpretation that has dominated the study of canonical extensions since their introduction. Indeed, the term $e^{L_{t b\triangleright}}$ is the flow corresponding to the differential equation $\dot{y}(t) = b(y(t))$. See equation \eqref{preLieVect2} for details. The second integral on the right is then a jump process that jumps along the integral curves of the vector field $b$ at the jump times of $N_t$. For processes with varying jump heights the distance moved along the integral curve is proportional to the jump size. A further layer of interpretation is that a fictitious time $t'$ is introduced that stretches at the jump times to accommodate the jumps; in this stretched time the process $X_t$ traverses the integral curve of $b$. See reference \cite{Applebaum09} for details. This interpretation has been used by Friz et al.~in \cite{FrizShekhar12} to construct a theory of rough paths for L\'evy processes. The analysis is considerably more complicated for general semimartingales; here the canonical extension cannot be readily collapsed to give integral curves, and this interpretation may not exist.

We will interpret \eqref{Marcus} in terms of substitution. Indeed, expand the decoration set $\{0,1\}$ corresponding to $N=1$ to the algebra $A=\mathbb{Z}_{\ge 0}$, where $0$ corresponds to the drift term, and $n>0$ to the $n$-fold quadratic variation $[Z]^{(n)}$. The vector fields $f_i$ are defined to be zero for $i>1$. The Marcus extension can be interpreted as a map $\Psi^*_v: \mathcal{F}_A \rightarrow \mathcal{F}_A$, where $v$ is the character multiplying by the inverse tree factorial, for which
\[
	X_t = (\mathfrak{F}_f\otimes Z_{0t})(\Psi^*_v\otimes\mathrm{id})(\varphi)
\]

\begin{theorem}[Hoffman and Marcus]
The Marcus extension map is the adjoint of the arborified Hoffman exponential for the decoration set $A$.
\end{theorem}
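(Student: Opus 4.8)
The plan is to show that the Marcus extension map coincides with $\Psi^*_v$, the adjoint of the arborified Hoffman exponential, by matching the two sides on generators and invoking the universal property of the free pre-Lie algebra. First I would observe that the Marcus canonical extension \eqref{Marcus}, read through the duality pairing established in \textsection 3, is exactly the statement that the driving vector field $b$ is being replaced by the pre-Lie expansion $\sum_{n\ge 1}\frac{1}{n!}L^{n-1}_{b\triangleright}(b)$ against the decorations $[Z]^{(n)}$. In the language of Lemma~\ref{frizmachine}, this means the relevant pre-Lie morphism $g$ is the one with $g(\ab_n)$ determined by the coefficient $\frac{1}{n!}$ of the $n$-fold pre-Lie power, so that its adjoint is $\Psi_a$ for the infinitesimal character $a$ dual to that data. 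The task is then to identify $a$ with the inverse-tree-factorial character's companion, i.e. to check that $\Psi^*_v$, as computed in \eqref{eq:dualArboHoffm} of Theorem~\ref{main:theorem}, reproduces precisely the Marcus coefficients.

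The key computational input is Brouder's Lemma~\ref{cm}, which states $L^{n-1}_{\ab\triangleright}(\ab)=\sum_{|\tau|=n}\mathrm{cm}(\tau)\,\tau$. Applying the elementary-differential pre-Lie morphism $\mathfrak{F}_f$ (extending $v(\ab_i):=b$) to this identity turns the abstract tree sum into the vector-field sum $L^{n-1}_{b\triangleright}(b)=\sum_{|\tau|=n}\mathrm{cm}(\tau)\,\mathfrak{F}_f[\tau](b)$, so the Marcus term $\frac{1}{n!}L^{n-1}_{b\triangleright}(b)$ carries the coefficient $\frac{\mathrm{cm}(\tau)}{n!}=\frac{\mathrm{cm}(\tau)}{|\tau|!}$ on the tree $\tau$ of size $n$. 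But that is exactly the coefficient appearing in \eqref{eq:dualArboHoffm}, namely $\Psi^*_v(\ab_i)=\sum_{[\tau]=\ab_i}\frac{\mathrm{cm}(\tau)}{|\tau|!}\tau$, once one recalls from the $A=\mathbb{Z}_{\ge 0}$ setup that the single-vertex decoration $i=n$ collects precisely the trees whose contracted decoration is $n$, i.e. trees with $n$ vertices (since $[\tau]=[i_1\cdots i_{|\tau|}]=i_1+\cdots+i_{|\tau|}$ and in the relevant generating piece all vertices carry decoration $1$). Thus $\Psi^*_v(\ab_1)=\sum_{n\ge1}\sum_{|\tau|=n}\frac{\mathrm{cm}(\tau)}{n!}\tau$, and pairing against $\mathfrak{F}_f\otimes Z_{0t}$ reproduces \eqref{Marcus} term by term.

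With the generators matched, I would close the argument by uniqueness: both the Marcus extension and $\Psi^*_v$ are pre-Lie algebra morphisms of $\mathcal{P}(A)$ — the former because substituting a vector field into a $B$-series is implemented by a pre-Lie morphism (this is the content of Lemma~\ref{frizmachine}), the latter because Theorem~\ref{main:theorem} asserts the adjoint of the arborified Hoffman exponential is a pre-Lie morphism — and a morphism of the free pre-Lie algebra is completely determined by its values on the decorations $A$. Since the drift decoration $0$ is sent to $\ab_0$ by both maps (the drift term $a$ is not modified), and the decorations $n\ge1$ are handled by the computation above, the two maps agree on all generators, hence everywhere.

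The main obstacle I anticipate is bookkeeping rather than conceptual: one must be careful about the precise translation between the analytic statement \eqref{Marcus} and the algebraic statement $X_t=(\mathfrak{F}_f\otimes Z_{0t})(\Psi^*_v\otimes\mathrm{id})(\varphi)$, in particular confirming that the symmetry factors $\sigma(\tau)$ in the formal flow map $\varphi=\sum_\tau\frac{1}{\sigma(\tau)}\tau\otimes\tau$ combine correctly with the $\frac{1}{\tau!\sigma(\tau)}=\frac{\mathrm{cm}(\tau)}{|\tau|!}$ identity used at the end of the proof of Theorem~\ref{main:theorem}, so that no stray combinatorial factor survives. One should also verify that the higher quadratic variations $[Z]^{(n)}$ genuinely play the role of the decoration-$n$ increments $Z^n_{0t}$ in the pairing, and that the vanishing $f_i=0$ for $i>1$ does not interfere — it does not, since those decorations only ever appear as contracted root labels $\ab_{[\tau]}=\ab_n$ whose elementary differential is the iterated pre-Lie power, not the (zero) vector field $f_n$.
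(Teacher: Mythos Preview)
Your approach is correct and matches the paper's own proof, which is a one-liner invoking exactly the three ingredients you use: the Marcus equation \eqref{Marcus}, Brouder's Lemma~\ref{cm}, and the adjoint formula \eqref{eq:dualArboHoffm} from Theorem~\ref{main:theorem}. You have simply made explicit the coefficient-matching and the pre-Lie uniqueness argument that the paper leaves to the reader.

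One small slip worth correcting: you write $\Psi^*_v(\ab_1)=\sum_{n\ge1}\sum_{|\tau|=n}\frac{\mathrm{cm}(\tau)}{n!}\tau$, but by \eqref{eq:dualArboHoffm} the element $\Psi^*_v(\ab_1)$ sums only over trees with $[\tau]=1$, which in $A=\mathbb{Z}_{\ge 0}$ is just $\ab_1$ itself. What you actually need (and what your surrounding prose correctly describes) is that for each $n\ge 1$, the expression $\Psi^*_v(\ab_n)$ contains the sum $\sum_{|\tau|=n}\frac{\mathrm{cm}(\tau)}{n!}\tau$ over trees with all vertex decorations equal to $1$; the remaining terms in $\Psi^*_v(\ab_n)$ involve decorations $i>1$ and vanish under $\mathfrak{F}_f$. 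It is this, one $n$ at a time, that matches the $n$th Marcus term via Lemma~\ref{cm}. The displayed formula just has the wrong subscript on the generator.
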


\begin{proof}
This is an immediate consequence of (\ref{Marcus}), Lemma~\ref{cm} and the form of the adjoint arborified Hoffman exponential given in Theorem~\ref{main:theorem}.
\end{proof}

\begin{rmk}{\rm{The Marcus extension gives us a canonical rough path renormalisation in the following sense. Suppose the branched rough path lift is generated by integrals $X^{\tau}$ obeying a quasi-shuffle law. The arborified Hoffman exponential tells us that the integrals $X^{\Psi_v(\tau)}$ are geometric in the sense that their arborification gives a geometric rough path, i.e., the usual rules of calculus are obeyed. We then search for a modification $\tilde{f}$ of the driving vector field $f$ such that
\[
	\sum_{\tau\in \mathcal{T}_A} \frac{1}{\sigma(\tau)} 
	\mathfrak{F}_{\tilde{f}} (\tau) X^{\tau}
	= \sum_{\tau\in \mathcal{T}_A} \frac{1}{\sigma(\tau)} 
	\mathfrak{F}_f(\tau) X^{\Psi_v(\tau)}
\]
This is equivalent to asking for the adjoint of the mapping $\Psi_v$ given in \eqref{eq:dualArboHoffm}, which as observed above is exactly the Marcus extension. 
}}
\end{rmk}


\section{The Hairer--Kelly map}
\label{sect:HKmap}

An alternative approach to the construction of a canonical rough path is given by Hairer and Kelly. In \cite{HaiKel15} they associate to any branched rough path $X^{\tau}$ and driving vector fields $f$ a geometric rough path $\bar{X}^w$ and a new set of vector fields $\bar{f}$ such that the solutions of the associated rough differential equations coincide. As for the Marcus extension, the set of decorations is expanded, but note that the Marcus extension and Hairer--Kelly map are in some sense inequivalent. Indeed, the expanded decoration sets do not coincide.

\begin{definition}[Hairer--Kelly map]\label{HKmap}
The map $\psi(\tau) : \mathcal{T} \rightarrow (T(\mathcal{T}), \shuffle)$ is defined as the unique Hopf algebra morphism from $\mathcal{H}_{BCK}$ to the shuffle Hopf algebra obeying
\[
	\psi(\tau) = (\psi\otimes \gamma) \circ \Delta,
\]
where $\Delta$ is coproduct in $\mathcal{H}_{BCK}$ and $\gamma:=\mathrm{id}-\epsilon$ is the augmentation projector.
\end{definition}

For instance
\begin{align*}
	\psi( \hspace{-.1cm} \begin{array}{c}\aaabbb\end{array}\hspace{-.1cm})
	&= \hspace{-.1cm} \begin{array}{c}\aaabbb\end{array}\hspace{-.1cm} 
	+ \hspace{-.1cm}  \begin{array}{c}\aabb\end{array}\hspace{-.2cm} \otimes\hspace{-.2cm} \begin{array}{c}\ab\end{array} \hspace{-.1cm} 
	+ \hspace{-.1cm} \begin{array}{c}\ab \hspace{-.0cm}\otimes\hspace{-.0cm} \ab\end{array} \hspace{-.2cm}\otimes\hspace{-.2cm} \begin{array}{c}\ab\end{array}\hspace{-.1cm} 
	+ \hspace{-.1cm} \begin{array}{c}\ab\end{array} \hspace{-.2cm} \otimes\hspace{-.2cm}  \begin{array}{c}\aabb\end{array}\\
	\psi( \hspace{-.1cm} \begin{array}{c}\aababb\end{array}\hspace{-.1cm})
	&=\hspace{-.1cm} \begin{array}{c}\aababb\end{array}\hspace{-.1cm}
	+ \hspace{-.1cm} \begin{array}{c}\ab \shuffle \ab\end{array} \hspace{-.2cm}\otimes\hspace{-.2cm} \begin{array}{c}\ab\end{array}\hspace{-.1cm} 
	+ \hspace{-.0cm} 2 \hspace{-.1cm} \begin{array}{c}\ab\end{array} \hspace{-.2cm} \otimes\hspace{-.2cm}  \begin{array}{c}\aabb\end{array}\hspace{-.1cm} .
\end{align*}
\begin{rmk}{\rm{
It is interesting to note that the arborification and contracting arborification maps may be constructed using a recursion of the above form, where $\gamma$ is replaced with the map that acts as the identity on all trees comprising a single node, and evaluates to zero on all other trees. Setting the range to be the shuffle Hopf algebra results in the arborification morphism, whilst if the range is quasi-shuffle we obtain the contracting arborification.}}
\end{rmk}

Central to the Hairer--Kelly approach is the construction of a geometric rough path $\bar{X}$ for which $\bar{X}(\psi(\tau))=X^{\tau}$ for all rooted trees $\tau \in \mathcal{T} $. Moreover, the construction is symmetric in the sense that $\bar{X}(\tau_1\otimes \tau_2)=\bar{X}(\tau_2\otimes \tau_1)$. There is a natural map $\pi:T(\mathcal{T})\rightarrow\mathcal{F}$ from tensor products of trees to forests that maps $\tau_1\otimes \cdots \otimes\tau_n\mapsto \tau_1\cdots\tau_n$. Much of the algebraic manipulation underlying the Hairer--Kelly results can therefore be expressed in terms of the symmetrised map $\tilde{\psi}=\pi\circ\psi:\mathcal{F}\rightarrow\mathcal{F}$. It turns out that this can be characterised using the right comodule structure on forests $\mathcal{F}$ associated to the substitution bialgebra. 

\begin{lemma}[symmetrised Hairer--Kelly map]
The symmetrised Hairer--Kelly map is the map $\tilde{\psi}:\mathcal{F} \rightarrow \mathcal{F}$ defined on trees $\tau$ by
\[
	\tilde{\psi}(\tau)=(\mathrm{id}\otimes \mathrm{cm}\cdot\sigma) \circ \tilde\Phi,
\]
where $\tilde\Phi$ is the coaction defined with respect to the free contraction, i.e., contractions of any subtree in $\tau$ to a node decorated by contracted tree $\tau$. The extension to forests is by $\tilde\psi(\tau_1\cdots\tau_n):=n!\tilde\psi(\tau_1)\cdots\tilde\psi(\tau_n)$.
\end{lemma}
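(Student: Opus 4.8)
It suffices to prove the asserted identity $\tilde\psi=\pi\circ\psi$ on trees. Concretely, the plan is to establish that for every rooted tree $\tau$
\[
	\pi\circ\psi(\tau)=\sum_{\overline{F}\subseteq\tau}\mathrm{cm}(\tau/\overline{F})\,\sigma(\tau/\overline{F})\,\overline{F},
\]
the sum ranging over the full subforests $\overline{F}$ of $\tau$; since $\tilde\Phi(\tau)=\sum_{\overline{F}\subseteq\tau}\overline{F}\otimes\tau/\overline{F}$ is given by the extraction--contraction formula \eqref{eq:subCoprod}, the right-hand side above is exactly $(\mathrm{id}\otimes\mathrm{cm}\cdot\sigma)\circ\tilde\Phi(\tau)$. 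The only fact we need about that side is that, by \eqref{CM}, $\mathrm{cm}(T)\,\sigma(T)=|T|!/T!$ is the number of linear extensions of the vertex poset of a rooted tree $T$ --- the combinatorial fact of \cite{Knuth} already used in the proof of Theorem~\ref{main:theorem}.

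First I would unfold the defining recursion $\psi=(\psi\otimes\gamma)\circ\Delta$. Writing the Butcher--Connes--Kreimer coproduct in admissible-cut form $\Delta(\tau)=\sum_c P^c(\tau)\otimes R^c(\tau)$, with $R^c(\tau)$ the component containing the root and $P^c(\tau)$ the forest of pruned branches, the recursion reads $\psi(\tau)=\sum_c \psi(P^c(\tau))\,R^c(\tau)$, where $R^c(\tau)$ is appended as the last letter of each word and the total cut disappears because $\gamma(\mathbf{1})=0$. Iterating, and using multiplicativity $\psi(FG)=\psi(F)\shuffle\psi(G)$, one checks by induction on $|\tau|$ that the nested cut data occurring are in bijection with the full subforests $\overline{F}=\sigma_0\cdots\sigma_m$ of $\tau$: the root component $\sigma_0$ determines the first cut, the remaining components distribute among the maximal branches $\rho_1,\dots,\rho_p$ of $\tau$ directly above $\sigma_0$, and $\overline{F}$ restricts on $\rho_i$ to a full subforest $\overline{F}_i$. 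The summand of $\psi(\tau)$ attached to $\overline{F}$ is then
\[
	W_{\overline{F}}(\tau)=\bigl(W_{\overline{F}_1}(\rho_1)\shuffle\cdots\shuffle W_{\overline{F}_p}(\rho_p)\bigr)\,\sigma_0 .
\]

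The key observation is that ``shuffle the branches, then append the root as last letter'' is exactly the operation producing the formal sum of all linear extensions of a rooted tree poset, written root-last. Since the contracted tree $\tau/\overline{F}$ has the contracted $\sigma_0$ for root and the trees $\rho_i/\overline{F}_i$ for its root branches, an immediate induction on $|\tau|$ gives $W_{\overline{F}}(\tau)=\sum_w w$, summed over the linear extensions $w$ of the poset of $\tau/\overline{F}$, each $w$ being a word whose letters are the components of $\overline{F}$ arranged in the order prescribed by $w$. Applying $\pi$ collapses every such word to the single forest $\overline{F}$, so
\[
	\pi\bigl(W_{\overline{F}}(\tau)\bigr)
	=\#\{\,\text{linear extensions of }\tau/\overline{F}\,\}\cdot\overline{F}
	=\frac{|\tau/\overline{F}|!}{(\tau/\overline{F})!}\,\overline{F}
	=\mathrm{cm}(\tau/\overline{F})\,\sigma(\tau/\overline{F})\,\overline{F}.
\]
Summing over all full subforests $\overline{F}\subseteq\tau$ gives the displayed identity, and hence the lemma.

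The step I expect to require care is the bookkeeping in the second paragraph: one must check that the unfolded recursion generates each full subforest exactly once (admissibility of the cut determined by $\sigma_0$, pairwise disjointness of the components), that the branches $\rho_i$ above $\sigma_0$ inherit full subforests compatibly with contraction --- which is precisely the recursive structure of the substitution coproduct $\delta^+$ --- and that the nested shuffle really coincides with ``sum over linear extensions, root-last''. Once this dictionary between nested admissible cuts and full subforests is in place, matching each coefficient with $|T|!/T!$ is the same counting as in Theorem~\ref{main:theorem}.
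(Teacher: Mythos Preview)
Your proposal is correct and follows essentially the same line as the paper's proof: both unfold the recursion $\psi=(\psi\otimes\gamma)\circ\Delta$ to identify the terms of $\psi(\tau)$ with full subforests $\overline{F}\subseteq\tau$, observe that the words attached to a given $\overline{F}$ are exactly the linear extensions of the contracted tree $\tau/\overline{F}$, and then invoke the Knuth count $|T|!/T!=\mathrm{cm}(T)\sigma(T)$ after applying $\pi$. Your write-up is more explicit about the inductive bookkeeping (the dictionary between nested admissible cuts and full subforests, and the ``shuffle the branches, append the root'' description of linear extensions), but the argument is the same.
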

For example, we have
\begin{eqnarray*}
	\tilde{\psi}( \hspace{-.1cm} \begin{array}{c}\aababb\end{array}\hspace{-.1cm})
	&=& 
	\hspace{-.1cm} \begin{array}{c}\aababb\end{array} (\mathrm{cm}\cdot\sigma)(1) +
	\begin{array}{c}\ab \,\ab\, \ab\end{array} (\mathrm{cm}\cdot\sigma)(\hspace{-.1cm} \begin{array}{c}\aababb\end{array}\hspace{-.1cm}) +
	2 \hspace{-.1cm} \begin{array}{c}\ab \;\aabb\end{array}
	(\mathrm{cm}\cdot\sigma)(\begin{array}{c}\aabb\end{array}) \\
	&=& \hspace{-.1cm} \begin{array}{c}\aababb\end{array}
	+ 2 \begin{array}{c}\ab \,\ab\, \ab\end{array}
	+ 2 \hspace{-.1cm} \begin{array}{c}\ab \;\aabb\end{array} 
\end{eqnarray*}

\begin{proof}
To prove this, we note that the effect of the recursion in Definition~\ref{HKmap} is to sum over all (not necessarily admissible) cuts of the tree $\tau$ into (ordered) subforests $\tau_1 \otimes \cdots \otimes \tau_n$, where the $\tau_i$ are ordered such that their roots must respect the partial order of the vertices in $\tau$. Upon symmetrization, this becomes the sum over all (unordered) subforests $\tau_1 \cdots \tau_n$ with a combinatorial factor coming from the number of linear extensions of the partial order of the roots of the $\tau_i$ to a total order. This cut coincides with the identification of possible subforests to conduct a substitution; the tree that results on the right-hand side of $\delta^+$ from contracting $\tau$ according to the subforest $\tau_1 \cdots \tau_n$ is precisely the tree $\tau$ associated to the partial order of the roots of $\tau_i$. The number of linear extensions of $\tau$ is precisely $\mathrm{cm}(\tau)\sigma(\tau)=\frac{\tau!}{|\tau|!}$, see \cite{Knuth}, hence the result follows.
\end{proof}

The mapping $\tilde\psi : \mathcal{F} \rightarrow \mathcal{F}$ is invertible, and we will make use of the (compositional) inverse of $\tilde\psi$. The following recursion applies.

\begin{lemma}
Let $\widehat\delta^+(\tau) = \tau^{(1)} \otimes \tau^{(2)}$ be the reduced substitution coproduct, expressed in sumless Sweedler notation. The inverse map $\tilde\psi^{-1}$ is defined on trees by
\[
	\tilde\psi^{-1}(\tau) = \tau - \tilde\psi^{-1}(\tau^{(1)})\cdot (\mathrm{cm}\cdot\sigma)(\tau^{(2)}) 
\]
and extended to forests by $\tilde\psi^{-1}(\tau_1\ldots\tau_n) = \frac{1}{n!}\tilde\psi^{-1}(\tau_1)\cdots\tilde\psi^{-1}(\tau_n)$.
\end{lemma}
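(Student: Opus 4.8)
The plan is to verify directly that the formula defines a two-sided compositional inverse of $\tilde\psi$ on trees, and then to check that the multiplicative normalisation on forests is consistent. First I would recall that by the previous lemma $\tilde\psi(\tau) = \tau + \sum (\text{lower terms}) \cdot (\mathrm{cm}\cdot\sigma)(\tau^{(2)})$, where the lower-order terms are forests arising from the reduced substitution coproduct $\widehat\delta^+$; crucially, grading $\mathcal{H}^A_+$ by the number of edges, the forest $\tau^{(1)}$ on the left of $\widehat\delta^+(\tau)$ always has strictly fewer edges than $\tau$ whenever the contracted tree $\tau^{(2)}$ is nontrivial, so the recursion terminates. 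This triangularity with respect to the edge grading, together with the fact that the identity component of $\tilde\psi$ is the identity map (the "$\tau$" term coming from the trivial contraction), is what makes $\tilde\psi$ invertible, and it is the structural fact I would lean on throughout.

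Next I would set up the induction. Suppose $\tilde\psi^{-1}$ has been defined on all trees with fewer edges than $\tau$. Applying $\tilde\psi$ to the proposed formula $\tilde\psi^{-1}(\tau) = \tau - \tilde\psi^{-1}(\tau^{(1)})\cdot(\mathrm{cm}\cdot\sigma)(\tau^{(2)})$ and using multiplicativity of $\tilde\psi$ on forests (in the appropriately normalised sense), the leading term $\tilde\psi(\tau) = \tau + \sum\tilde\psi(\tau^{(1)})(\mathrm{cm}\cdot\sigma)(\tau^{(2)})$ should cancel against $\tilde\psi$ applied to the correction term, once one recognises $\tilde\psi\circ\tilde\psi^{-1} = \mathrm{id}$ holds on the strictly smaller forests $\tau^{(1)}$ by the inductive hypothesis. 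The bookkeeping here amounts to showing that the reduced coproduct $\widehat\delta^+$ is the "same" sum on both sides — i.e.\ that the correction term in the definition of $\tilde\psi^{-1}$ precisely undoes the sum of non-identity terms in $\tilde\psi(\tau)$ — which is a standard Möbius-type inversion once the triangular structure is isolated. One should be slightly careful that $\widehat\delta^+$ is coassociative in the right sense so that iterating the recursion is well-defined; this follows from the bialgebra structure of $\mathcal{H}^A_+$ recalled in \textsection 2.4.

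The forest normalisation requires separate attention. Since $\tilde\psi(\tau_1\cdots\tau_n) = n!\,\tilde\psi(\tau_1)\cdots\tilde\psi(\tau_n)$, the inverse on a product of $n$ trees must carry the reciprocal factor $\frac{1}{n!}$, which is exactly what the statement asserts; I would check that this is compatible with the tree-level recursion by noting that when $\tilde\psi^{-1}(\tau^{(1)})$ is itself a forest of, say, $m$ trees, the factors of $m!$ introduced at intermediate stages are precisely accounted for, so that no inconsistency arises. This is really just verifying that the two normalisation conventions — the $n!$ on $\tilde\psi$ and the $\frac{1}{n!}$ on $\tilde\psi^{-1}$ — are mutually inverse as multiplicative twists, which is immediate.

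The main obstacle I anticipate is not any single hard estimate but rather the combinatorial matching of the non-identity terms: making precise that the sum appearing in $\tilde\psi(\tau) - \tau$ is indexed by exactly the same data (namely the terms of $\widehat\delta^+(\tau)$) as the correction in $\tilde\psi^{-1}(\tau)$, with coefficients $(\mathrm{cm}\cdot\sigma)(\tau^{(2)})$ lining up correctly. Once the edge-grading triangularity is made explicit and the coassociativity of $\widehat\delta^+$ is invoked, the inversion is forced and the proof reduces to unwinding Sweedler notation. I would therefore present it as a short induction on the number of edges, with the forest case dispatched by the multiplicativity remark.
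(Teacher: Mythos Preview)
Your proposal is correct and follows essentially the same approach as the paper: both arguments rest on the observation that $\tilde\psi(\tau) = \tau + \tau^{(1)}\cdot(\mathrm{cm}\cdot\sigma)(\tau^{(2)})$ is the identity plus terms of strictly lower edge-degree, so that the inverse is forced by a triangular recursion. The paper is somewhat terser --- it simply invokes the general fact that any map of the form $\tilde\phi(\tau)=\tau+\tilde\phi_-(\tau)$ with $\tilde\phi_-$ strictly lowering degree has inverse $\tilde\phi^{-1}(\tau)=\tau-\tilde\phi^{-1}(\tilde\phi_-(\tau))$ --- whereas you spell out the induction and the forest normalisation more explicitly, but the content is the same.
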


\begin{proof}
The above recursion is of a form similar to that defining the antipode in a connected graded Hopf algebra. Indeed it is a sort of `twisted' antipode. One can prove that this is the inverse of $\tilde\psi$, as we have
\[
	\tilde\psi(\tau) = \tau + \tau^{(1)}\cdot (\mathrm{cm}\cdot\sigma)(\tau^{(2)}), 
\]
and the term on the right is of strictly lower degree than the tree $\tau$. Any map of the form $\tilde\phi(\tau) = \tau + \tilde\phi_{-}(\tau)$ where $\tilde\phi_{-}(\tau)$ is of lower grade than $\tau$ has an inverse of the form $\tilde\phi^{-1}(\tau) = \tau -\tilde\phi^{-1}(\tilde\phi_-(\tau))$ . Applying this to the above gives the result.
\end{proof}

The Hairer--Kelly approach is based around the following equivalence,
\[
	\sum_{\tau \in \mathcal{F}} \frac{1}{\sigma(\tau)} \mathfrak{F}_f(\tau) X^{\tau}_{st}
	= \sum_{\tau\in \mathcal{F}} \frac{1}{\sigma(\tau)} \mathfrak{F}_f(\tilde\psi^*(\tau)) X^{\tilde\psi^{-1}(\tau)}_{st},
\]
which is an immediate consequence of the definition of the adjoint. Indeed, the intention is to define a new rough path $\bar{X}_{st}$ obeying $\bar{X}^{\tau}_{st} = X^{\tilde\psi^{-1}(\tau)}_{st}$. The key result is the following description of the flow map in terms of $\bar{X}$, which is the basis for considering $\bar{X}$ as a geometric rough path.

\begin{theorem}\cite{HaiKel15} 
The flow map $\phi_{st}$ for the rough differential equation driven by the vector fields $f_i$ and rough path $X$ may be written as a word series using the rough path $\bar{X}$ as follows
\[
	\phi_{st} = \sum_{\tau_1 \cdots \tau_n \in \mathcal{F}} \frac{1}{n!} 
	\mathfrak{F}_f ({\tau_1* \cdots * \tau_n}) \bar{X}_{st}^{\tau_1\cdots\tau_n}
\]
\end{theorem}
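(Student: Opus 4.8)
\medskip
\noindent\textbf{Proof strategy.}
The plan is to start from the equivalence displayed just above the theorem and to read it as a statement about the flow element, then repackage it. The left-hand side of that equivalence is the flow map in forest $B$-series form, $\phi_{st}=\sum_{F\in\mathcal{F}}\tfrac1{\sigma(F)}\mathfrak{F}_f(F)X^{F}_{st}$ (obtained by evaluating $\mathfrak{F}_f\otimes X_{st}$ on the group-like flow element $\sum_{F\in\mathcal F}\tfrac1{\sigma(F)}F\otimes F\in\mathcal{H}^{\Omega *}_{BCK}\bar{\otimes}\mathcal{H}^{\Omega}_{BCK}$, with Grossman--Larson product on the left, the forest version of \eqref{eq:CODEsolution}). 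The displayed identity says precisely that this flow element is fixed by $(\tilde\psi^{*}\otimes\tilde\psi^{-1})$, since $\tilde\psi^{*}$ is the adjoint of $\tilde\psi$ for the pairing $\langle F,G\rangle=\sigma(F)\delta_{F,G}$ whose copairing is $\sum_F\tfrac1{\sigma(F)}F\otimes F$, so that $(\tilde\psi^{*}\otimes\mathrm{id})$ and $(\mathrm{id}\otimes\tilde\psi)$ agree on it. Using the defining relation $\bar X^{F}_{st}=X^{\tilde\psi^{-1}(F)}_{st}$ — valid on all of $\mathcal{F}$ because $\bar X$ is a shuffle character and, by the preceding lemma, $\tilde\psi^{-1}$ is multiplicative up to the factor $\tfrac1{n!}$, matching the multiplicativity of $X$ — we obtain
\[
	\phi_{st}=\sum_{F\in\mathcal{F}}\frac1{\sigma(F)}\,\mathfrak{F}_f\big(\tilde\psi^{*}(F)\big)\,\bar X^{F}_{st}.
\]
Thus the theorem reduces to the purely algebraic identity, in $\mathcal{H}^{\Omega *}_{BCK}\bar{\otimes}\mathcal{H}^{\Omega}_{BCK}$,
\[
	\sum_{F\in\mathcal{F}}\frac1{\sigma(F)}\,\tilde\psi^{*}(F)\otimes F
	=\sum_{\tau_1\cdots\tau_n\in\mathcal{F}}\frac1{n!}\,(\tau_1\ast\cdots\ast\tau_n)\otimes(\tau_1\cdots\tau_n),
\]
with $\ast$ the Grossman--Larson product \eqref{GL}; applying $\mathfrak{F}_f\otimes\bar X_{st}$ then yields the stated formula, once one recalls that $\mathfrak{F}_f$ is the Guin--Oudom algebra morphism \eqref{def:preLie}--\eqref{GL} from $\mathcal{H}^{\Omega *}_{BCK}\cong\mathcal{U}(\mathfrak{g}_\Omega)$ into differential operators, so that $\mathfrak{F}_f(\tau_1\ast\cdots\ast\tau_n)=\mathfrak{F}_f(\tau_1)\circ\cdots\circ\mathfrak{F}_f(\tau_n)$ is literally a word over the alphabet $\mathcal{T}$, the factor $\tfrac1{n!}$ accounting for the overcounting in passing from ordered words to forests since $\bar X$ is symmetric.

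The core step is then to compute $\tilde\psi^{*}$ and reindex the left-hand sum. From the symmetrised Hairer--Kelly lemma, $\tilde\psi(\tau)=(\mathrm{id}\otimes\mathrm{cm}\cdot\sigma)\circ\tilde\Phi(\tau)$ on trees and $\tilde\psi(\tau_1\cdots\tau_n)=n!\,\tilde\psi(\tau_1)\cdots\tilde\psi(\tau_n)$ on forests. Dualising: the adjoint of the free-contraction coaction $\tilde\Phi$, contracted against $\mathrm{cm}\cdot\sigma$ in the right leg, is exactly the grafting/substitution action on the dual, hence — through the Guin--Oudom construction and the Cartier--Milnor--Moore identification $\mathcal{H}^{\Omega *}_{BCK}\cong\mathcal{U}(\mathfrak{g}_\Omega)$ — the Grossman--Larson product; and the adjoint of the $n!$-twisted multiplicativity of $\tilde\psi$ converts the commutative forest product in the right tensor slot into the weight $\tfrac1{n!}$ together with a symmetrised $\ast$-product in the left slot. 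Concretely, grouping the terms of $\sum_F\tfrac1{\sigma(F)}\tilde\psi^{*}(F)\otimes F$ by the forest appearing in the right slot, one checks that the contribution attached to $\tau_1\cdots\tau_n$ is $\tfrac1{n!}\sum_{\rho\in S_n}\tau_{\rho(1)}\ast\cdots\ast\tau_{\rho(n)}$, which is the right-hand side.

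I expect the bookkeeping of the normalisation factors in this last step to be the main obstacle. It hinges on the identity $\mathrm{cm}(\tau)\sigma(\tau)=\tau!/|\tau|!$ from \eqref{CM}, used exactly as in the proof of the symmetrised Hairer--Kelly lemma, where $\mathrm{cm}(\tau)\sigma(\tau)$ counts the linear extensions of the root order of the contracted subforest (see \cite{Knuth}); these are precisely the numbers that reconcile the $\sigma$-normalisation of the copairing, the $\mathrm{cm}\cdot\sigma$ weights carried by $\tilde\psi$ and $\tilde\psi^{-1}$, and the $n!$'s coming from the twisted multiplicativity, so that in the end only the bare Grossman--Larson product weighted by $\tfrac1{n!}$ survives. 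Once the algebraic identity is established the theorem follows formally; it is prudent to verify the low-degree cases (forests with up to three vertices) against the displayed examples of $\tilde\psi$ as a consistency check.
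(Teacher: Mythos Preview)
The paper does not give its own proof of this theorem: it is attributed to Hairer and Kelly \cite{HaiKel15} and stated without argument, so there is no proof here to compare your proposal against.

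On the proposal itself: the outline is reasonable, but the decisive step is asserted rather than carried out. You correctly reduce matters to the algebraic identity
\[
	\sum_{F\in\mathcal{F}}\frac1{\sigma(F)}\,\tilde\psi^{*}(F)\otimes F
	=\sum_{\tau_1\cdots\tau_n\in\mathcal{F}}\frac1{n!}\,(\tau_1\ast\cdots\ast\tau_n)\otimes(\tau_1\cdots\tau_n),
\]
and then claim that dualising $\tilde\psi=(\mathrm{id}\otimes\mathrm{cm}\cdot\sigma)\circ\tilde\Phi$ ``is exactly the grafting/substitution action on the dual, hence \ldots\ the Grossman--Larson product''. That sentence is the entire content of the theorem, and no computation is offered: invoking Guin--Oudom and Cartier--Milnor--Moore names the target structure but does not explain why the adjoint of the free-contraction coaction, contracted against $\mathrm{cm}\cdot\sigma$, lands on $\ast$ with exactly the right normalisation. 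You flag the bookkeeping of $\sigma$, $\mathrm{cm}\cdot\sigma$, and the $n!$ as ``the main obstacle'', and it is; as written, the proposal stops precisely where the work begins. A genuine argument would either compute $\tilde\psi^{*}$ on a general forest directly from the lemma characterising $\tilde\psi$ and match it term by term against the $B_+/B_-$ description of $\ast$ around \eqref{def:preLie}--\eqref{GL}, or follow the original Hairer--Kelly route via the unsymmetrised map $\psi$ into $T(\mathcal{T})$ and the recursion of Definition~\ref{HKmap}.

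A smaller imprecision: you use $\bar X^{F}_{st}=X^{\tilde\psi^{-1}(F)}_{st}$ on all forests as a definition, but in the paper (and in \cite{HaiKel15}) $\bar X$ is constructed on $T(\mathcal{T})$ with $\bar X(\psi(\tau))=X^{\tau}$, and the passage to the symmetrised picture via $\pi$ relies on the symmetry of $\bar X$, which is a conclusion rather than an input.
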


The rough path $\bar{X}$ is then interpreted as a geometric rough path as indicated in the above formula, by expanding the set of decorations to include trees. The usual `sewing' procedure allows us to consider only trees up to a given order depending on the regularity of the rough path $X$. As a geometric rough path $\bar{X}$ is symmetric, as follows from the description $\bar{X}^{\tau_1\cdots\tau_n} = X^{\phi^{-1}(\tau_1\cdots\tau_n)}$ and the symmetry of $\tilde\psi^{-1}$. The grading of $\bar{X}$ must then be based on word length, and sewing must be performed `horizontally'. Hairer and Kelly show that this is possible using the Lyons--Victoir extension theorem.

\begin{rmk}{\rm{
Note that the words do not correspond to iterated integrals of $X$, but rather polynomials, as can be seen from the definition via $\psi$. This is the reason for the symmetry of $\bar{X}$.}}
\end{rmk}

\begin{rmk}{\rm{
We conclude by noting that the Hairer--Kelly extension is not the arborification of the Marcus extension. Indeed, this is related to the observation of Hairer and Kelly that their map contains more terms than required in the case of Wiener processes. The Marcus canonical extension answers the question about how to construct simplifications in more general cases where a quasi-shuffle relation is still present.}} 
\end{rmk}



\begin{thebibliography}{ii}

\bibitem{Applebaum09} 
D.~Applebaum, 
L\'evy Processes and Stochastic Calculus, 
2nd edn., Cambridge Studies in Advanced Mathematics {\bf{116}}, Cambridge University Press (2009).

\bibitem{BoeChe17}
H.~Boedihardjo, I.~Chevyrev,
{\it{An isomorphism between branched and geometric rough paths}}
arXiv:1712.01965v1

\bibitem{Brouder00}
C.~Brouder,
{\it{Runge--Kutta methods and renormalization}},
Eur.~Phys.~ J.~C {\bf{12}}, (2000), 521-534.

\bibitem{BruHaiZam16}
Y.~Bruned, M.~Hairer, L.~Zambotti,
{\it{Algebraic renormalisation of regularity structures}},
 arXiv:1610.08468v2.

\bibitem{BCFP17}
Y.~Bruned, I.~Chevyrev, P.~K.~Friz, R.~Preiss,
{\it{A Rough Path Perspective on Renormalization}},
arXiv:1701.01152 [math.PR].

\bibitem{CEFM11}
D.~Calaque, K.~Ebrahimi-Fard, D.~Manchon, 
{\it{Two interacting Hopf algebras of trees: a Hopf-algebraic approach to composition and substitution of B-series}}, 
Adv.~in Appl.~Math.~{\bf{47}}, no. 2, (2011), 282-308.

\bibitem{Cartier07}
P.~Cartier,
{\it{A primer of Hopf algebras}},
in Frontiers in Number Theory, Physics and Geometry II,
P.~Cartier, P.~Moussa, B.~Julia, and P.~Vanhove, Eds., Berlin Heidelberg, Springer, 2007, pp.~537-615.

\bibitem{Cartier10}
P.~Cartier,
{\it{Vinberg algebras, Lie groups and combinatorics}},
Clay Math.~Proc.~{\bf{11}}, (2011), 107-126.

\bibitem{Chapoton01}
F.~Chapoton, 
{\it{Alg\`ebres pr\'e-Lie et alg\`ebres de Hopf li\'ees \`a la renormalisation}},
Comptes Rendus de l'Acad\'emie des Sciences-Series I-Mathematics {\bf{332}} (8), (2001), 681-684.

\bibitem{ChaLiv01}
F.~Chapoton, M.~Livernet,
{\textsl{Pre-Lie algebras and the rooted trees operad}},
Int.~Math.~Res.~Not.~{\bf{2001}}, (2001), 395-408.
 
\bibitem{CHV07}
Ph.~Chartier, E.~Hairer, G.~Vilmart, 
{\textsl{Numerical integrators based on modified differential equations}}, 
Math.~Comp.~{\bf{76}}, (2007),1941-1953.

\bibitem{CHV10}
Ph.~Chartier, E.~Hairer, G.~Vilmart, 
{\em{Algebraic structures of B-series}} 
Found.~Comput.~Math.~{\bf{10}}, (2010), 407-427. 	
	
\bibitem{CK98}
A.~Connes and D.~Kreimer,
{\textit{Hopf algebras, renormalisation and noncommutative geometry}},
Commun.~Math.~Phys.~{\bf{199}}, (1998), 203-242.
				
				
\bibitem{CEMW14} 
C.~Curry, K.~Ebrahimi-Fard, S.J.A.~Malham, A. Wiese, 
{\it L\'evy processes and quasi-shuffle algebras},
Stochastics \textbf{86}:4, (2014), 632-642.

\bibitem{EFLMK}
K.~Ebrahimi-Fard, A.~Lundervold and H.~Z.~Munthe-Kaas,
{\em{On the Lie enveloping algebra of a post-Lie algebra}}, 
J.~Lie Theory {\bf{25}} No.~4 (2015), 1139--1165.

\bibitem{EMPW15a}
K.~Ebrahimi-Fard, S.J.A.~Malham, F.~Patras, A. Wiese, 
{\it The exponential Lie series for continuous semimartingales}, 
Proc.~R.~Soc.~A {\bf 471}, (2015), 20150429.

\bibitem{EMPW15b}
K.~Ebrahimi-Fard, S.J.A.~Malham, F.~Patras, A. Wiese, 
{\it Flows and stochastic Taylor series in It\^o calculus},
J.~Phys.~A: Math.~Theor.~{\bf 48}, (2015), 495202.

\bibitem{EMF16}
K.~Ebrahimi-Fard, F.~Fauvet, M.~Manchon,
{\it{A comodule-bialgebra structure for word-series substitution and mould composition}},
Journal of Algebra {\bf{489}}, (2017), 552-581.

\bibitem{FM17}
F.~Fauvet, F.~Menous,
{\it{Ecalle's arborification-coarborification transforms and the Connes--Kreimer Hopf algebra}}, 
Annales scientifiques de l'ENS {\bf{50}}, fascicule 1, (2017), 39-83. 

\bibitem{FrizShekhar12}
P.~Friz, A.~Shekhar,
{\it{General Rough integration, L\'evy Rough paths and a L\'evy--Kintchine type formula}},
Ann.~Probab.~{\bf{45}}, Number 4, (2017), 2707-2765.

\bibitem{FrizHairer14}
P.~Friz, M.~Hairer, 
A Course on Rough Paths. With an Introduction to Regularity Structures,
Springer Universitext, 2014. 

\bibitem{Gaines94}
J.~G.~Gaines,
{\it{The algebra of iterated stochastic integrals}}, 
Stochastics and Stochastic Reports {\bf{49}}:3-4, (1994), 169-179.

\bibitem{Gubinelli10}
M.~Gubinelli,
{\it{Ramification of rough paths}},
J.~Differential Equations {\bf{248}}, (2010), 693-721.

\bibitem{Gubinelli11}
M.~Gubinelli,
{\it{Abstract integration, Combinatorics of Trees and Differential Equations}},
Contemporary Mathematics {\bf{539}}, (2011), 135-151.

\bibitem{HLW06}
E.~Hairer, C.~Lubich, G.~Wanner,
Geometric numerical integration,
2nd edn, Springer, Berlin (2006)

\bibitem{HaiKel15}
M.~Hairer, D.~Kelly,
{\it{Geometric versus non-geometric rough paths}},
Ann.~Inst.~H.~Poincar\'e Probab.~Statist.~{\bf{51}}, Number 1 (2015), 207-251.

\bibitem{Hairer14}
M.~Hairer, 
{\it{A theory of regularity structures}},
Invent.~Math.~{\bf{198}}, no.~2, (2014), 269-504.

\bibitem{Hoffman00}
M.~E.~Hoffman,
{\it{Quasi-shuffle products}},
Journal of Algebraic Combinatorics {\bf{11}}, (2000), 49-68.

\bibitem{HoffmanIhara16} 
M.~E.~Hoffman, K.~Ihara, 
{\it{Quasi-shuffle products revisited}}, 
Journal of Algebra {\bf{481}}, 1, (2017), 293-326.

\bibitem{Knuth}
D.~Knuth,
The Art of Computer Programming, Vol 4a: Combinatorial Algorithms, Part 1.
Addison-Wesley, Reading, Massachusetts (2011)

\bibitem{Lyons98}
T.~Lyons,
\textsl{Differential equations driven by rough signals},
Rev.~Mat.~Iberoamericana \textbf{14}, No 2, (1998), 215-310.

\bibitem{LCL07}
T.~Lyons, M.J.~Caruana, T.~L\'evy, 
\textsl{Differential Equations Driven by Rough Paths},
Ecole d'Et\'e de Probabilit\'es de Saint-Flour XXXIV-2004, {\bf{1908}}, Springer, 2007.

\bibitem{Manchon08}
D.~Manchon,
{\emph{Hopf algebras in renormalisation}}, 
in Handbook of algebra~{\bf{5}}, (2008), 365-427.

\bibitem{Manchon11}
D.~Manchon,
{\emph{A short survey on pre-Lie algebras}}, 
E.~Schr\"odinger Institut Lectures in Math. Phys., ``Noncommutative Geometry and Physics: 
Renormalisation, Motives, Index Theory", EMS, A.~Carey Ed.~(2011).

\bibitem{Marcus78}
S.~I.~Marcus, 
{\it{Modeling and analysis of stochastic differential equations driven by point processes}}, 
IEEE Trans.~Inform.~Th.~{\bf{24}}, (1978), 164-172.

\bibitem{Marcus81}
S.~I.~Marcus, 
{\it{Modeling and approximation of stochastic differential equations driven by semimartingales}}, 
Stochastics {\bf{4}}, Issue 3, (1981), 223-245. 

\bibitem{McShane75}
E.~J.~McShane, 
{\it{Stochastic differential equations}}, 
J.~Multivariate Analysis {\bf{5}}, (1975), 121-177.

\bibitem{NoPaTh13}
J.-Ch.~Novelli, F.~Patras, J.-Y.~Thibon,
{\it{Natural endomorphisms of quasi-shuffle Hopf algebras}},
Bulletin de la Soc.~Math.~France {\bf{141}}, no.~1,  (2013), 107-130.

\bibitem{OudomGuin08}
J.M.~Oudom, D.~Guin,
{\it{On the Lie enveloping algebra of a pre?Lie algebra}}, 
Journal of K-theory: K-theory and its Applications to Algebra, Geometry, and Topology {\bf{2}}, Issue 01, (2008), 147-167.

\bibitem{Protter05}
P.~E.~Protter, 
Stochastic Integration and Differential Equations, 
Version 2.1, 2nd edn., Springer, Berlin (2005).

\bibitem{Reutenauer93} 
C.~Reutenauer, 
Free Lie algebras, 
Oxford University Press (1993).

\end{thebibliography}
\end{document}